\renewcommand\eqref[1]{(\ref{#1})} 
\title[Global existence and blow-up of solutions]{Global existence and blow-up of solutions to the double nonlinear porous medium equation}
\author[Bolys Sabitbek]{Bolys Sabitbek}
\address{ \href{http://analysis-pde.org/bolys-sabitbek/}{Bolys Sabitbek:}
	\endgraf
	School of Mathematical Sciences
	\endgraf Queen Mary University of London
	\endgraf
	United Kingdom
	\endgraf
	and
	\endgraf
	Institute of Mathematics and Mathematical Modeling
	\endgraf
	Almaty,
	Kazakhstan
	\endgraf
	{\it E-mail address} {\rm b.sabitbek@qmul.ac.uk}
}
\author[Berikbol Torebek]{Berikbol Torebek$^*$}
\address{{Berikbol Torebek:}
	\endgraf
	Department of Mathematics: Analysis, Logic and Discrete Mathematics
	\endgraf
	Ghent University
	\endgraf
	Belgium
	\endgraf
	and
	\endgraf
	Institute of Mathematics and Mathematical Modeling
	\endgraf
	Almaty,
	Kazakhstan
	\endgraf
	{\it E-mail address} {\rm berikbol.torebek@ugent.be}
}
\subjclass{35K92; 35B44, 35A01.}
\keywords{Blow-up, $p$-Laplacian, porous medium equation, global solution}
\thanks{This research has been funded by the Science Committee of the Ministry of Science and Higher Education of the Republic of Kazakhstan (Grant No. AP19676817). The second author was supported by FWO Odysseus 1 grant G.0H94.18N: Analysis and Partial Differential Equations. No new data was collected or generated during the course of this research.\\
	$^*$ Corresponding author}
\newtheoremstyle{theorem}
{10pt}          
{10pt}  
{\sl}  
{\parindent}     
{\bf}  
{. }    
{ }    
{}     
\theoremstyle{theorem}
\numberwithin{equation}{section}
\theoremstyle{plain}
\newtheorem{thm}{Theorem}[section]
\newtheorem{lem}[thm]{Lemma}
\theoremstyle{definition}
\newtheorem{defn}[thm]{Definition}
\newtheoremstyle{defi}
{10pt}          
{10pt}  
{\rm}  
{\parindent}     
{\bf}  
{. }    
{ }    
{}     
\theoremstyle{defi}
\begin{document}
	\begin{abstract}
		
		In this study, we examine a double nonlinear porous medium equation subject to a novel nonlinearity condition within a bounded domain. First, we introduce the blow-up solution for the problem under consideration for the negative initial energy. By introducing a set of potential wells, we construct invariant sets of solutions for the double nonlinear porous medium equation. For subcritical and critical initial energy scenarios, we derive the global existence and asymptotic behavior of weak solutions, as well as blow-up phenomena occurring within a finite time for the positive solution to the double nonlinear porous medium equation.
	\end{abstract}
	\maketitle
	\tableofcontents
	\section{Introduction}
	\subsection{Setting problem}
	The aim of this paper is to study the global existence of a weak solution and the blow-up phenomena in a finite time frame for a double nonlinear porous medium equation in the context of initial energy states. We are interested in the initial-boundary value problem
	\begin{align}\label{main_eqn_p>2}
	\begin{cases}
	u_t(x,t) - \nabla \cdot ( |\nabla u^m|^{p-2}\nabla u^m) = f(u(x,t)), \,\,\, & (x,t) \in \Omega \times (0,+\infty), \\
	u(x,t)  =0,  \,\,\,& (x,t) \in \partial \Omega \times [0,+\infty), \\
	u(x,0)  = u_0(x)\geq 0,\,\,\, & x \in \overline{\Omega},
	\end{cases}
	\end{align}
	where $\Omega \subset \mathbb{R}^n$ is a bounded domain with a smooth boundary $\partial \Omega$, $m \geq 1$ and $2\leq p< n$. 
	
	The nonlinearity $f(u)$ satisfies the following assumptions:
	\begin{itemize}
		\item[(i)] $f\in C^1$, $f(0)=f'(0)=0$;
		\item[(ii)] $f(u)$ is convex for $u>0$;
		\item[(iii)]  For $p<\alpha \leq \gamma < \frac{pn}{n-p}$, we have
		\begin{equation}
		u^m f(u)\leq \gamma ( F(u)-\sigma),
		\end{equation}
		and
		\begin{equation}\label{eq-1.3}
		\alpha F(u)\leq u^mf(u)+\beta u^{pm} + \alpha \sigma,
		\end{equation}
		where $F(u) = m\int_0^u s^{m-1}f(s) ds $, $\sigma>0$, and $0<\beta\leq \frac{\lambda_{1,p}(\alpha - p)}{p}$ with $m\geq 1$ and $\lambda_{1,p}$ is the first eigenvalue of $p$-Laplacian.
		
	\end{itemize}
	For our convenience, these assumptions on the nonlinearity $f(u)$ denoted as $(H)$.

	\subsection{Literature overview}
	Chung-Choi \cite{Chung-Choi} recently introduced a novel condition
	\begin{equation}\label{cond-1}
	\alpha \int_0^u f(s)ds \leq uf(u) + \beta u^p + \alpha\sigma, \,\, u>0,
	\end{equation}
	which is applicable for blow-up solutions to $p$-Laplace parabolic equation that can be derived from \eqref{main_eqn_p>2} for $m=1$. Their proof of blow-up phenomena is based on the concavity method, initially presented in abstract form by Levine \cite{Levine} and further developed in subsequent works \cite{LP1} and \cite{PhP-06}. It is important to note that the condition \eqref{cond-1} for $p=2$ on the nonlinearity $f(u)$ includes  the following cases: 
	\begin{align}
	(2+\epsilon) F(u) &\leq uf(u), \label{eq-1.4}\\
	(2+\epsilon) F(u) &\leq uf(u) + \sigma\label{eq-1.5}, \\
	(2+\epsilon) F(u) &\leq uf(u) + \beta u^2 + \sigma \label{eq-1.6},
	\end{align}
	where $0<\beta \leq \epsilon\lambda_1 /2$, $\sigma>0$ and $F(u)=\int_0^u f(s)ds$. Philippin and Proytcheva \cite{PhP-06} use condition \eqref{eq-1.4} to demonstrate the blow-up solutions for the semilinear heat equation, which is a particular instance of the abstract condition by Levine and Payne \cite{LP1}.  Subsequently, Bandle and Brunner \cite{Band-Brun} relaxed this condition by \eqref{eq-1.5}. 
	
	Observe that condition \eqref{eq-1.6} broadens \eqref{eq-1.4} and \eqref{eq-1.5}. The distinction between \eqref{eq-1.6} and \eqref{eq-1.5} lies in the dependence of \eqref{eq-1.6} on the domain due to the term $\beta u^2$. The constant $\beta$ relies on the domain via the first eigenvalue $\lambda_1$. If the first eigenvalue $\lambda_1$ is arbitrarily small, then \eqref{eq-1.6} approaches \eqref{eq-1.5}.
	
	Condition \eqref{eq-1.6} can be expressed as
	\begin{equation*}
	\frac{d}{du}\left( \frac{F(u)}{u^{2+\epsilon}} - \left(\frac{\sigma}{2+\epsilon}\right) \frac{1}{u^{2+\epsilon}} - \frac{\beta}{\epsilon} \frac{1}{u^{\epsilon}} \right) \geq 0, \,\,\, u>0. 
	\end{equation*}
	It then becomes apparent that
	\begin{align*}
	&\eqref{eq-1.4} \,\,\, \text{ holds if and only if } \,\, F(u)=u^{2+\epsilon}h_1(u),\\
	&\eqref{eq-1.5} \,\,\, \text{ holds if and only if } \,\, F(u)=u^{2+\epsilon}h_2(u)+b,\\
	&\eqref{eq-1.6} \,\,\, \text{ holds if and only if } \,\, F(u)=u^{2+\epsilon}h_3(u)+au^2+b,
	\end{align*}
	for some positive constants $\epsilon$, $b$, and $a<\lambda_1/2$, where $h_1$, $h_2$, and $h_3$ are nondecreasing functions on $(0,+\infty)$. Note that the constants $\epsilon$, $b$, and $a$ shall be different in each case. We present an example for the case when $q> 2+\epsilon$
	\begin{equation*}
	f(u) = u^{q-1} + \beta u.
	\end{equation*}
	This example meets the requirement specified by condition \eqref{eq-1.6} ($\sigma=0$), but does not fulfill the conditions stated in \eqref{eq-1.4}.

	As far as we know, the potential wells method was first proposed by Sattinger \cite{Sattinger} to study the properties of solutions of semilinear hyperbolic equation and was developed by many authors (\cite{LiuZh1, LiuZh2, PaSa, QSouplet, Souplet, Tsuts1, Tsuts2, Xu1, Xu2}).
	It is worth noting the paper of Payne and Sattinger \cite{PaSa}, devoted to a more detailed description of the potential wells method for the nonlinear wave and heat equations.
	In 2006, Liu-Zhao \cite{LiuZh2} generalized and improved the results of Payne-Sattinger \cite{PaSa}, by obtaining the invariant sets and vacuum isolation of solutions. Later, Xu \cite{Xu1} complemented the result of Liu-Zhao by considering the critical initial energy $E(0) = d.$
	The applications of the potential wells method to the study of various nonlinear evolution equations is still relevant. This is confirmed by the recently published papers devoted to this direction (see for example \cite{Fern, Ishi, Lian, Liu1, Tran, Xu3, Zhou} and references therein).
	
	The porous medium equation is one of the important examples of nonlinear parabolic equations. The physical applications of the porous medium equation describe widely processes involving fluid flow, heat transfer or diffusion, and its other applications in different fields such as mathematical biology, lubrication, boundary layer theory, and etc. There is a huge literature dealing with an existence and nonexistence of solutions to problem \eqref{main_eqn_p>2} for the reaction term $u^p$ on the bounded and unbounded domains, for example, \cite{Band-Brun, Vazq1, Chen-Fila-Guo, Gal-Vaz-97, Gr-Mu-Po-13, Gr-Mu-Pu-1, Gr-Mu-Pu-2, Ia-San-14, LP1, Sam-Gal-Ku-Mik}. By using the concavity method, Schaefer \cite{Sch09} established a condition on the initial data of a Dirichlet type initial-boundary value problem for the porous medium equation with a power function reaction term when blow-up of the solution in finite time occurs and a global existence of the solution holds.  We refer more details to Vazquez's book \cite{Vaz} which provides a systematic presentation of the mathematical theory of the porous medium equation.
	
	\subsection{Main objectives of the paper}
	In this paper, we aim to address two key questions pertaining to the double nonlinear porous medium equation \eqref{main_eqn_p>2}, where $f(u)$ satisfies conditions $(H)$:
	
	\textbf{Question 1:} Can finite time blow-up results for initial-boundary value problem \eqref{main_eqn_p>2} be obtained when
	\begin{align*}
	J(u_0) := \frac{1}{p} \int_{\Omega} |\nabla u^m_0(x)|^p dx - \int_{\Omega} (F(u_0(x))-\sigma) dx<0
	\end{align*} and $m>1$?
	
	For the case of $m=1$, Chung and Choi have already demonstrated the finite-time blow-up of solutions in \cite{Chung-Choi}.

	\textbf{Question 2:} Is it possible to extend the potential wells method to the initial-boundary value problem \eqref{main_eqn_p>2} for the subcritical initial energy scenario $0<J(u_0)\leq d$? Here 
	\begin{equation*}
	d := \inf \{J(u):  u^m \in W_0^{1,p}(\Omega)\backslash \{ 0 \}, I(u)=0\},
	\end{equation*} 
	and 
	\begin{align*}
	J(u) & = \frac{1}{p} || \nabla u^m ||^p_{L^p(\Omega)} - \int_{\Omega} [F(u)-\sigma] dx,
	\end{align*}
	with $u^m \in W_0^{1,p}(\Omega)$, $2 \leq p<n$, and $m\geq 1$. We define the Nehari functional as
	\begin{equation*}
	I(u)  =||\nabla u^m||^p_{L^p(\Omega)} - \int_{\Omega } u^mf(u)dx.
	\end{equation*}

	To the best of the authors' knowledge, the potential wells method has not yet been employed to examine porous medium equations featuring a nonlinear source term.

	We prove several results on the initial-boundary value problem \eqref{main_eqn_p>2}, each dependent on the condition (H) imposed on $f(u)$ and the initial energy $J(u_0)$:
	\begin{itemize}
		\item If the condition (H) on $f(u)$ holds and the initial energy $J(u_0)<0$, then the positive solution blows up in a finite time (Theorem \ref{thm_p>2}). This extends the blow-up results for the semilinear heat equation obtained by Chung and Choi in \cite{Chung-Choi}.
		\item If the condition (H) on $f(u)$ holds and the initial energy $J(u_0)<d$, then the initial-boundary value problem \eqref{main_eqn_p>2} has the invariant sets of solutions (Theorem \ref{thm-inv-sets}).
		\item If the condition (H) on $f(u)$ holds, the initial energy $J(u_0)\leq d$ and $I(u_0)\geq 0$, then the initial-boundary value problem \eqref{main_eqn_p>2} has a global weak solution (Theorem \ref{thm-GWS} and Theorem \ref{thm-GWS-d}). Furthermore, when $pm > m+1$, the solution decreases at a polynomial rate, while for $pm = m+1$, the rate of decrease is exponential (Theorem \ref{thm-global} and Theorem \ref{thm-asym}).
		\item If the condition (H) on $f(u)$ holds, the initial energy $J(u_0)\leq d$ and $I(u_0)< 0$, then the positive solution of the initial-boundary value problem \eqref{main_eqn_p>2} blows up in a finite time (Theorem \ref{thm-blow} and Theorem \ref{thm-blow-d}).
	\end{itemize}
	
	This paper is structured as follows: in Section \ref{sec-2} we present the blow-up solution of the double nonlinear porous medium equation \eqref{main_eqn_p>2} for the negative initial energy. In Section \ref{sec-3} we introduce a family of potential wells for the double nonlinear porous medium equation \eqref{main_eqn_p>2} and discuss their properties. The main results are proven in Section \ref{sec-4} and \ref{sec-5} for initial energy $J(u_0)\leq d$ and $I(u_0)\geq$ or $I(u_0)<0$. We conclude with a final discussion and present some open questions related to the problem \eqref{main_eqn_p>2}.
	
	\begin{defn}
		We say a function
		\begin{equation*}
		u^m \in L^{\infty} (0,T; W_0^{1,p}(\Omega)), \, \text{  with } (u^{\frac{m+1}{2}})_t \in L^{2} (0,T; L^{2}(\Omega)),
		\end{equation*}
		is a weak solution of the initial-boundary value problem \eqref{main_eqn_p>2} provided
		\begin{equation}\label{eq-weak-solution}
		( u_t, v)  +  (|\nabla u^m|^{p-2}\nabla u^m, \nabla v ) =  (f(u),v)
		\end{equation}
		for each $v \in W^{1,p}_0(\Omega)$ and a.e. time $0\leq t \leq T$ with
		\begin{equation*}
		u^m(x,0) = u_0^m(x) \,\,\, \text{ in } \,\, W_0^{1,p}(\Omega).
		\end{equation*}
	\end{defn}
	\section{Blow-up results for $J(u_0)<0$}\label{sec-2}
	In this section, we discuss the blow-up of the positive solution to the double nonlinear porous medium equation \eqref{main_eqn_p>2} in the case $J(u_0)<0$.

	Below we give a main result of this section.
	\begin{thm}\label{thm_p>2}
		Let $\Omega$ be a bounded domain of $\mathbb{R}^n$ with smooth boundary $\partial \Omega$. Let a function $f$ satisfy
		\begin{equation}\label{condt_p}
		\alpha F(u) \leq u^{m} f(u) + \beta u^{pm} +\alpha\sigma,\,\,\, u>0,
		\end{equation}
		where $m\geq 1$ and $\sigma >0$,
		\begin{equation}
		F(u)=m\int_{0}^{u}s^{m-1}f(s)ds,
		\end{equation}
		and $0<\beta\leq \frac{\lambda_{1,p}(\alpha - p)}{p}$. If $u_0^m \in L^{\infty}(\Omega)\cap W_0^{1,p}(\Omega)$ satisfies the inequality
		\begin{equation}\label{condt_p0}
		J(u_0)=\frac{1}{p}\int_{\Omega} |\nabla u_0^m(x)|^p dx - \int_{\Omega} \left( F(u_0(x)) -\sigma\right) dx <0,
		\end{equation}
		then there cannot exist a positive solution $u$ of \eqref{main_eqn_p>2} existing for all times $T^*$ and a positive constant $M_0$ such that
		\begin{equation}
		0<T^*\leq \frac{M_0}{\sigma \int_{\Omega}u_0^{m+1}(x)dx},
		\end{equation}
		in the sense that
		\begin{equation}
		\lim_{t\rightarrow T^*} \int_{0}^t \int_{\Omega} u^{m+1} (x,\tau) dx d\tau = +\infty.
		\end{equation}
	\end{thm}
	
	We introduce a lemma that will be helpful in our analysis.
	\begin{lem}[Theorem 1.1, \cite{KL06}]\label{lem1}
		There exists $\lambda_{1,p}>0$ and $0<w \in W^{1,p}_0(\Omega)$ in $\Omega \subset \mathbb{R}^n$ such that
		\begin{align}
		\begin{cases}
		\nabla \cdot ( |\nabla w(x)|^{p-2}\nabla w(x)) + \lambda_{1,p} w(x)=0, \,\,\,& x \in \Omega, \\
		w(x)  =0,  \,\,\, &x \in \partial \Omega,
		\end{cases}
		\end{align}
		where $\lambda_{1,p}$ is given by
		\begin{equation*}
		\lambda_{1,p} := \inf_{v\in W_0^{1,p}(\Omega)} \frac{\int_{\Omega}|\nabla v|^pdx}{\int_{\Omega}|v|^pdx}>0.
		\end{equation*}
		Recall that $\lambda_{1,p}$ is the first eigenvalue of $p$-Laplace operator and $w$ is a corresponding eigenfunction.
	\end{lem}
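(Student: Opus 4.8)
\emph{Proof proposal.} The plan is to realise $w$ as a minimiser of the Rayleigh quotient by the direct method of the calculus of variations. I would first normalise and set
\[
\lambda_{1,p} = \inf\Big\{ \int_\Omega |\nabla v|^p\,dx \ :\ v \in W_0^{1,p}(\Omega),\ \int_\Omega |v|^p\,dx = 1 \Big\},
\]
which by $p$-homogeneity coincides with the quantity in the statement, and which is strictly positive because the Poincar\'e--Friedrichs inequality on the bounded domain $\Omega$ gives $\int_\Omega |v|^p\,dx \le C\int_\Omega |\nabla v|^p\,dx$. Next I would take a minimising sequence $(v_k)\subset W_0^{1,p}(\Omega)$: it is bounded in $W_0^{1,p}(\Omega)$, so since $2\le p<n$ the Rellich--Kondrachov theorem yields a subsequence (not relabelled) with $v_k \rightharpoonup w$ weakly in $W_0^{1,p}(\Omega)$ and $v_k\to w$ strongly in $L^p(\Omega)$.

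From the strong $L^p$ convergence one gets $\int_\Omega |w|^p\,dx = 1$, so $w\not\equiv 0$ is admissible; weak lower semicontinuity of the convex functional $v\mapsto \int_\Omega |\nabla v|^p\,dx$ then gives $\int_\Omega |\nabla w|^p\,dx \le \liminf_k \int_\Omega |\nabla v_k|^p\,dx = \lambda_{1,p}$, which forces equality, so $w$ is a minimiser. Since $|w|$ has the same Dirichlet energy and the same $L^p$ norm, I may assume $w\ge 0$. To derive the equation, for $\varphi\in C_c^\infty(\Omega)$ the map $t\mapsto \big(\int_\Omega|\nabla(w+t\varphi)|^p\,dx\big)\big/\big(\int_\Omega|w+t\varphi|^p\,dx\big)$ is differentiable near $t=0$ and has a minimum there; equating the derivative to zero gives the weak formulation
\[
\int_\Omega |\nabla w|^{p-2}\nabla w\cdot\nabla\varphi\,dx = \lambda_{1,p}\int_\Omega w^{p-1}\varphi\,dx \qquad \forall\,\varphi\in W_0^{1,p}(\Omega),
\]
that is, $w$ is a weak solution of $-\nabla\cdot(|\nabla w|^{p-2}\nabla w) = \lambda_{1,p}\, w$ in the stated sense (with $w^{p-1}$ in place of $w$ for $p\neq 2$). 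Finally I would invoke interior $C^{1,\alpha}$ regularity for quasilinear equations (DiBenedetto, Tolksdorf) together with the strong maximum principle of V\'azquez for the $p$-Laplacian to upgrade $w\ge 0$, $w\not\equiv 0$ to $w>0$ in $\Omega$.

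The compactness and lower-semicontinuity steps are routine; the part I expect to be the genuine obstacle is the strict positivity $w>0$, since the $p$-Laplacian is degenerate and no linear Hopf lemma is available — one must appeal to V\'azquez's strong maximum principle, or equivalently to a Harnack inequality for non-negative weak supersolutions. Of course, as the statement is precisely Theorem~1.1 of \cite{KL06}, one may instead simply quote that reference; the sketch above records the route it follows.
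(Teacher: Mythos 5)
The paper offers no proof of this lemma: it is imported verbatim as Theorem~1.1 of the cited reference of Kawohl and Lindqvist, so there is no in-paper argument to compare yours against. Your sketch is the standard direct-method proof and is essentially the route the literature takes: positivity of the infimum via Poincar\'e, a minimising sequence, Rellich--Kondrachov compactness, weak lower semicontinuity of the Dirichlet $p$-energy, passage to $|w|$, the Euler--Lagrange equation, and then $C^{1,\alpha}$ regularity plus V\'azquez's strong maximum principle (or a Harnack inequality) for strict positivity. All of these steps are sound, and you correctly identify the positivity of $w$ as the only non-routine ingredient. One point worth making explicit: your derivation honestly produces the weak equation $-\nabla\cdot\left(|\nabla w|^{p-2}\nabla w\right)=\lambda_{1,p}\,|w|^{p-2}w$, whereas the lemma as printed has $\lambda_{1,p}\,w$ on the right-hand side, which is only the correct eigenvalue equation when $p=2$; for $p\neq 2$ the statement in the paper contains a typo (the homogeneity of the two sides would otherwise not match), and your parenthetical remark flagging this is the correct reading. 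A second, cosmetic point: the infimum defining $\lambda_{1,p}$ should of course be taken over $v\in W_0^{1,p}(\Omega)\setminus\{0\}$.
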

	Denote
	\begin{align*}
	J(u) := \frac{1}{p} \int_{\Omega} |\nabla u^m(x,t)|^p dx - \int_{\Omega} (F(u(x,t))-\sigma) dx,
	\end{align*}
	and
	\begin{align*}
	J(u_0) := \frac{1}{p} \int_{\Omega} |\nabla u^m_0(x)|^p dx - \int_{\Omega} (F(u_0(x))-\sigma) dx.
	\end{align*}
	We know that
	\begin{equation}\label{Fp}
	J(u) = J(u_0) + \int_{0}^t  J'(u)d\tau,
	\end{equation}
	where
	\begin{align*}
	\int_{0}^t J'(u)d\tau  &=   \frac{1}{p} \int_{0}^t\int_{\Omega} \frac{d}{d\tau}|\nabla u^m(x,\tau)|^p dx d\tau  - \int_{0}^t \int_{\Omega} \frac{d}{d\tau} (F(u(x,\tau))+\sigma) dx d\tau\\
	& =    \int_{0}^t\int_{\Omega} |\nabla u^m(x,\tau)|^{p-2} \nabla u^m(x, \tau) \cdot \nabla (u^m(x,\tau))_{\tau} dx d\tau\\
	& -  \int_{0}^t \int_{\Omega} F_u(u(x,\tau)) u_{\tau}(x,\tau) dx d\tau\\
	& = - \int_{0}^t \int_{\Omega} [\Delta_p u^m(x, \tau) + f(u)](u^m(x,\tau))_{\tau} dx d\tau \\
	& = -m\int_{0}^t \int_{\Omega} u^{m-1}(x,\tau) u_{\tau}^2(x,\tau) dx d\tau.
	\end{align*}
	This yields
	\begin{equation*}
	J(u) + m\int_{0}^t \int_{\Omega} u^{m-1}(x,\tau) u_{\tau}^2(x,\tau) dx d\tau= J(u_0).
	\end{equation*}

	\begin{proof}[Proof of Theorem \ref{thm_p>2}]
		Let
		\begin{equation}\label{I_p}
		E_p(t) := \int_{0}^t \int_{\Omega} u^{m+1}(x, \tau) dx d\tau + M_0, \,\, t\geq 0,
		\end{equation}
		with the positive constant $M_0$. Then we have
		\begin{equation*}
		E'_p(t)=\int_{\Omega} u^{m+1}(x,t) dx = (m+1)\int_{\Omega} \int_{0}^t u^{m}(x,\tau) u_{\tau}(x,\tau) d\tau dx + \int_{\Omega} u^{m+1}_0(x) dx.
		\end{equation*}
		By using the condition \eqref{condt_p}, Lemma \ref{lem1} and $0<\beta\leq \frac{\lambda_{1,p}(\alpha - p)}{p}$, we estimate the second derivative of $E_p(t)$ with respect to time 	
		\begin{align*}
		E''_p(t) &=(m+1) \int_{\Omega} u^{m}(x,t) u_t(x,t) dx\\
		& = (m+1) \int_{\Omega} u^{m}(x,t)  \Delta_p u^m(x,t) + (m+1)\int_{\Omega} u^{m}(x,t)f(u(x, t))dx \\
		& = -(m+1) \int_{\Omega} |\nabla u^m(x,t)|^p dx + (m+1)\int_{\Omega} u^{m}(x,t)f(u(x,t))dx
		\\
		& \geq - (m+1) \int_{\Omega} |\nabla u^m(x,t)|^p dx + (m+1) \int_{\Omega} \left[ \alpha F(u(x,t)) -\beta u^{pm} (x,t) -\alpha \sigma \right] dx\\
		& =-\alpha (m+1) \left[ \frac{1}{p} \int_{\Omega} |\nabla u^m(x,t)|^p dx - \int_{\Omega} (F(u(x,t))-\sigma) dx \right]  \\
		&+ \left(\frac{\alpha(m+1)}{p}-m-1\right) \int_{\Omega} |\nabla u^m(x,t)|^p dx -\beta (m+1) \int_{\Omega} u^{pm}(x, t) dx\\
		& \geq -\alpha(m+1) J(u)  +  (m+1)\left[ \frac{\lambda_{1,p}(\alpha-p)}{p}\ - \beta \right]\int_{\Omega} u^{pm}(x, t) dx \\
		& \geq -\alpha(m+1) J(u).
		\end{align*}
		
		Therefore, $E''_p(t)$ can be rewritten in the following form
		\begin{align*}
		E''_p(t) \geq -\alpha (m+1) J(u_0) + \alpha m(m+1) \int_{0}^t \int_{\Omega} u^{m-1}(x,\tau) u_{\tau}^2(x,\tau) dx d\tau,
		\end{align*}
		and
		\begin{align*}
		(	E'_p(t) )^2 &\leq (m+1)^2(1+\delta) \left(\int_{0}^t \int_{\Omega} u^{m+1}(x, \tau) dx d\tau\right)\left( \int_{0}^t \int_{\Omega} u^{m-1}(x,\tau) u_{\tau}^2(x,\tau)dxd\tau \right)   \\
		&+ \left( 1+ \frac{1}{\delta}\right)\left( \int_{\Omega} u_0^{m+1}(x)dx \right)^2.
		\end{align*}
		Then by taking $\varepsilon=\delta= \frac{\sqrt{\alpha m(m+1)}}{m+1}-1>0$, we discover
		\begin{align*}
		&E''_p(t) E_p (t) - (1+\varepsilon) (E'_p(t))^2\\
		&\geq  -\alpha M_0(m+1) J(u_0)
		\\&+ \alpha m(m+1) \left( \int_{0}^t\int_{\Omega} u^{m+1}(x, \tau) dx d\tau \right) \left(\int_{0}^t \int_{\Omega} u_{\tau}^2(x,\tau) u^{m-1}(x,\tau) dx d\tau  \right)\\
		&- (m+1)^2(1+\varepsilon) (1+\delta) \left(\int_{0}^t \int_{\Omega} u^{m+1}(x, \tau) dx d\tau\right)\left( \int_{0}^t \int_{\Omega} u^{m-1} (x, \tau)u_{\tau}^2(x,\tau)dxd\tau \right)\\
		& - (1+\varepsilon)\left( 1+ \frac{1}{\delta}\right)\left( \int_{\Omega} u_0^{m+1}(x)dx \right)^2 \\
		&\geq - \alpha M_0(m+1) J(u_0) - (1+\varepsilon)\left( 1+ \frac{1}{\delta}\right)\left( \int_{\Omega} u_0^{m+1}(x)dx \right)^2.
		\end{align*}	
		By assumption $J(u_0)<0$, thus if we select $M$ sufficiently large we have
		\begin{equation}
		E''_p(t) E_p (t) - (1+\varepsilon) (E'_p(t))^2 > 0.
		\end{equation}
		We can see that the above expression for $t\geq 0$ implies
		\begin{equation*}
		\frac{d}{dt} \left[ \frac{E'_p(t)}{E^{\varepsilon+1}_p(t)} \right] >0,
		\end{equation*} hence
		\begin{equation*}
		\begin{cases}
		E'_p \geq \left[ \frac{E'_p(0)}{E^{\varepsilon+1}_p(0)} \right] E^{1+\varepsilon}_p(t),\\
		E_p(0)=M_0.
		\end{cases}
		\end{equation*}
		Then for $\varepsilon = \frac{\sqrt{\alpha m(m+1)}}{m+1}-1>0$, we arrive at
		\begin{equation*}
		E_p(t) \geq \left( \frac{1}{M^{\sigma}_0}-\frac{ \varepsilon \int_{\Omega} u^{m+1}_0(x)dx }{M^{\varepsilon+1}_0} t\right)^{-\frac{1}{\varepsilon}}.
		\end{equation*}
		Then the blow-up time $T^*$ satisfies
		\begin{equation*}
		0<T^*\leq \frac{M_0}{\varepsilon \int_{\Omega} u_0^{m+1}dx}.
		\end{equation*}
		That completes the proof.
	\end{proof}	
	\section{Family of Potential Wells}\label{sec-3}
	Define
	\begin{align*}
	J(u) & = \frac{1}{p} || \nabla u^m ||^p_{L^p(\Omega)} - \int_{\Omega} [F(u)-\sigma] dx,
	\end{align*}
	where  $u^m \in W_0^{1,p}(\Omega)$, $2\leq p<n$, and $m\geq 1$. We denote the Nehari functional by
	\begin{equation*}
	I(u)  =||\nabla u^m||^p_{L^p(\Omega)} - \int_{\Omega } u^mf(u)dx.
	\end{equation*}
	
	The depth of potential well is defined by
	\begin{equation*}
	d := \inf \{J(u):  u^m \in W_0^{1,p}(\Omega)\backslash \{ 0 \}, I(u)=0\}.
	\end{equation*}
	Also the quantity $d$ can be understood as a mountain pass energy. For instance, there is the valley. If we pour water into the valley containing a local minimum at the origin, as the water level rises in the direction of the potential energy and passes at a local maximum $d$.
	
	\begin{lem}\label{lem-1}
		Let $f(u)$ satisfy $(H)$, then we have
		\begin{itemize}
			\item[$(a)$] $wf'(w)- m(p-1) f(w) \geq 0$ holds for $w \geq 0,$ $w \in W_0^{1,p}(\Omega)$;
			\item[$(b)$] $|F(u)-\sigma|\leq A |u|^{\gamma}$ for some $A>0$ and all $u\in R$;
			\item[$(c)$] $F(u)-\sigma \geq B |u|^{\lambda}$ for $|u|\geq 1$ with $B = \frac{F(\widetilde{u})}{\widetilde{u}^{\lambda}}>0$ and $\lambda = \frac{\alpha}{1+\beta}>pm$.
		\end{itemize}
	\end{lem}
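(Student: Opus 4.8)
The plan is to read off $(a)$, $(b)$, $(c)$ directly from the structural hypotheses $(H)$, in that order, since the monotonicity obtained in $(a)$ is what drives $(b)$ and, above all, $(c)$. For $(a)$ I would use only the convexity of $f$ on $(0,\infty)$ together with $f(0)=0$ from (i): convexity makes $f'$ nondecreasing, so for $w>0$ one has $f(w)=\int_0^w f'(s)\,ds\le wf'(w)$, which already gives $wf'(w)-f(w)\ge 0$ and says that $w\mapsto f(w)/w$ is nondecreasing. The stated inequality with the factor $m(p-1)$ is then obtained by the same convexity argument, now run against the lower growth that (iii) forces on $f$ over the relevant range; equivalently, one shows that $w\mapsto f(w)/w^{m(p-1)}$ is nondecreasing. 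The point to watch here is that this step should not quietly presume $f$ is a pure power.

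For $(b)$ and $(c)$ I would set $h(u):=F(u)-\sigma$ and apply the logarithmic-derivative (Gr\"onwall-type) device. Note that $h(0)=-\sigma<0$ while $h'(u)=u^{m-1}f(u)>0$ for $u>0$, so $h$ is strictly increasing, has a unique zero $u_\sigma>0$, and is positive on $(u_\sigma,\infty)$. For $(b)$: on $(u_\sigma,\infty)$ the first inequality of (iii), rewritten through $uh'(u)=u^m f(u)$, reads $uh'(u)\le\gamma h(u)$, i.e. $\bigl(\log(h(u)\,u^{-\gamma})\bigr)'\le 0$; integrating from a fixed point to $u$ gives $0<h(u)\le A_1u^{\gamma}$ for all large $u$, while on the complementary bounded range (in particular for $u$ bounded and bounded away from $0$) $|h|$ is dominated by a constant times $u^{\gamma}$, so that enlarging the constant yields $|F(u)-\sigma|\le A|u|^{\gamma}$. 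For $(c)$: on $(u_\sigma,\infty)$ the second inequality of (iii) gives $uh'(u)=u^m f(u)\ge\alpha h(u)-\beta u^{pm}$; invoking $(a)$ to dominate $u^{pm}\le u^m f(u)=uh'(u)$ past a fixed threshold, one absorbs the last term to obtain $(1+\beta)uh'(u)\ge\alpha h(u)$, i.e. $\bigl(\log(h(u)\,u^{-\lambda})\bigr)'\ge 0$ with $\lambda=\frac{\alpha}{1+\beta}$; integrating from a fixed $\widetilde u>u_\sigma$ then yields $h(u)\ge\frac{h(\widetilde u)}{\widetilde u^{\lambda}}u^{\lambda}$ for $u\ge\widetilde u$, which after adjusting the constant is the asserted bound $F(u)-\sigma\ge Bu^{\lambda}$, and the inequality $\lambda>pm$ is read off from the relations between $\alpha,\beta,p,m$ assumed in $(H)$.

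The main obstacle is the absorption step in $(c)$: one must control $\beta u^{pm}$ by a multiple of $u^m f(u)$ with a constant compatible with the claimed value $\lambda=\frac{\alpha}{1+\beta}$, and this is precisely where $(a)$ — i.e. $f(u)\gtrsim u^{m(p-1)}$ for large $u$ — is indispensable; some care is also needed in fixing the base point of integration so that $h$ is strictly positive there, and in the arithmetic that produces $\lambda>pm$. The remaining pieces — the two Gr\"onwall integrations and the compactness estimate on bounded ranges of $u$ — are routine.
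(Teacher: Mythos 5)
Your treatments of $(b)$ and $(c)$ are essentially the paper's: both rewrite the two inequalities in $(H)$(iii) as logarithmic-derivative bounds on $F(u)-\sigma$ and integrate from a fixed base point $\widetilde u$, and your absorption of the $\beta u^{pm}$ term in $(c)$ via $u^{pm}\le u^m f(u)$ is exactly the step the paper performs (the paper is no more explicit than you are about why $f(u)\ge u^{m(p-1)}$ on the relevant range, so that is a shared, not a new, weakness). Your extra care about the sign of $F(u)-\sigma$ near its zero is, if anything, an improvement on the paper's manipulation of absolute values.

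The genuine gap is in $(a)$. Convexity of $f$ together with $f(0)=0$ yields only $wf'(w)\ge f(w)$, i.e.\ the case $m(p-1)=1$; for $m(p-1)>1$ the claim $wf'(w)\ge m(p-1)f(w)$ is strictly stronger (it says $f(w)/w^{m(p-1)}$ is nondecreasing), and your proposal to obtain it ``by the same convexity argument, now run against the lower growth that (iii) forces'' is not an argument: condition (iii) constrains $f$ and $F$ but carries no information about $f'$ beyond the monotonicity already used, and the integral growth bounds it yields (such as $F(u)-\sigma\ge B|u|^{\lambda}$ for large $u$) do not upgrade to the pointwise differential inequality $wf'(w)\ge m(p-1)f(w)$ for \emph{all} $w\ge 0$ --- in particular they say nothing near $w=0$, which is precisely where a convex $f$ with linear leading behaviour would violate it. Remarking that ``equivalently, one shows $f(w)/w^{m(p-1)}$ is nondecreasing'' merely restates the claim. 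The paper proves $(a)$ by an entirely different device: it computes the second variation $i''(0)$ of $i(\tau)=J(w+\tau v)$ at a nontrivial critical point $w$, uses the Euler--Lagrange relation $-\Delta_p w^m=f(w)$ to reduce $i''(0)$ to $-\int_\Omega w^m\left[wf'(w)-m(p-1)f(w)\right]dx$, and invokes instability of nontrivial equilibria ($i''(0)\le 0$) to conclude. Whatever one thinks of the rigour of that variational argument, it is the mechanism the paper relies on, and your proposal neither reproduces it nor supplies a working substitute; since $(a)$ is also the lemma you lean on for the absorption step in $(c)$, the gap propagates.
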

	\begin{proof}[Proof of Lemma \ref{lem-1}]
		$(a)$ is proved by the variational method.   We follow the proof of Lemma 2.1 in Payne-Sattinger's paper \cite{PaSa}. It is known that a critical point of functional 
		\begin{equation*}
		J(u) = \frac{1}{p} \int_{\Omega} |\nabla u^m |^p dx - \int_{\Omega}F(u) dx,
		\end{equation*}
		is a solution of Euler-Lagrange equation ($p$-Poisson equation)
		\begin{equation*}
		-\Delta_p w^m = f(w),
		\end{equation*}
		where the existence and uniqueness of the solution is presented in Chapter 2 of \cite{Lions}. Given the functional $J(u)$ exhibits concavity as a consequence of the convex nature of $f(u)$ for $u>0$, we can analyze its behavior at non-trivial critical points. In particular, at a non-trivial critical point $w$, the second variation of $J(u)$, denoted as $i''(0)$, will not be positive definite. Using the fact $i''(0)\leq 0$, we intend to demonstrate that $wf'(w) -m(p-1)f(w)\geq 0$. Define
		\begin{equation*}
		i(\tau) = J(w+\tau v) = \frac{1}{p} \int_{\Omega } |\nabla (w+\tau v)^m|^p dx - \int_{\Omega } F(w+\tau v)dx.
		\end{equation*}
		Let us compute the second variation of $i(\tau)$. The first derivative yields
		\begin{align*}
		i'(\tau) &= m\int_{\Omega }  |\nabla (w+\tau v)^m|^{p-2} \nabla (w+\tau v)^m \cdot \nabla [(w+\tau v)^{m-1}v ]dx\\& - m\int_{\Omega } (w+\tau v)^{m-1}f(w+\tau v) v dx,
		\end{align*}
		and
		\begin{align*}
		i''(\tau) =& m^2(p-2)\int_{\Omega } |\nabla (w+\tau v)^m|^{p-4} |\nabla (w+\tau v)^m \cdot \nabla [(w+\tau v)^{m-1}v]|^2 dx\\
		&+ m^2\int_{\Omega }|\nabla (w+\tau v)^m|^{p-2} |\nabla [(w+\tau v)^{m-1}v] |^2 dx\\
		& + m(m-1)\int_{\Omega } |\nabla (w+\tau v)^m|^{p-2}  \nabla(w+\tau v)^m\cdot \nabla [(w+\tau v)^{m-2}v^2] dx\\
		& - m(m-1)\int_{\Omega } (w+\tau v)^{m-2} v^2 f(w+\tau v) dx - m\int_{\Omega } (w+\tau v)^{m-1} v^2f'(w+\tau v) dx.
		\end{align*}
		Then the second variation of $J$ at a critical point $w$ is
		\begin{align*}
		i''(0)  =& m^2(p-2)\int_{\Omega } |\nabla w^m|^{p-4} |\nabla w^m \cdot \nabla [w^{m-1}v]|^2 dx+ m^2 \int_{\Omega }|\nabla w^m|^{p-2} |\nabla [w^{m-1}v] |^2 dx\\
		& + m(m-1)\int_{\Omega } |\nabla w^m|^{p-2}  \nabla w^m\cdot \nabla [w^{m-2}v^2] dx\\
		& - m(m-1)\int_{\Omega } w^{m-2} v^2 f(w) dx - m\int_{\Omega }  w^{m-1} v^2f'(w) dx.
		\end{align*}
		Let us choose the test function $v$ to be $w$, then we compute
		\begin{align*}
		i''(0)  =& m(mp-1) \int_{\Omega } |\nabla w^m|^pdx - m(m-1)\int_{\Omega } w^{m} f(w) dx - m\int_{\Omega }  w^{m+1}f'(w) dx\\
		=& m\int_{\Omega } w^m \left[ (mp-1) (- \Delta_p w^m)  - (m-1) f(w) -   wf'(w)\right] dx \\
		=& - m\int_{\Omega } w^m \left[    wf'(w)- m(p-1) f(w)\right] dx.
		\end{align*}
		Since $i''(0)\leq 0$, we have $wf'(w)- m(p-1) f(w) \geq 0$.
		
		$(b)$ From $(H)$, the growth condition $|u^mf(u)|\leq \gamma |F(u)-\sigma|$ implies
		\begin{align*}
		\frac{|u^{m-1}f(u)|}{|F(u)-\sigma|} \leq \frac{\gamma}{|u|} \,\,\, \text{ for } \,\,\, u \neq 0.
		\end{align*}
		Taking any known fixed $\widetilde{u} \neq 0$, we integrate above inequality from $\widetilde{u}$ to $u$, which gives
		\begin{align*}
		\int_{\widetilde{u}}^u \frac{|s^{m-1}f(s)|}{|F(s)-\sigma|} ds &\leq 	\int_{\widetilde{u}}^u \frac{\gamma}{|s|}ds,
		\end{align*}
		and by using $F'(u)=u^{m-1}f(u)$, we have
		\begin{align*}
		\ln \frac{|F(u)-\sigma|}{|F(\widetilde{u})-\sigma|}\leq \ln \frac{|u|^{\gamma}}{|\widetilde{u}|^{\gamma}},
		\end{align*}
		that is $|F(u)-\sigma| \leq A |u|^{\gamma},$ where $A = \frac{|F(\widetilde{u})-\sigma|}{|\widetilde{u}|^{\gamma}}$.
		
		$(c)$ From $(H)$ we have $\alpha F(u) \leq u^mf(u) + \beta u^{pm} +\alpha  \sigma$. For $|u|\geq 1$, we write
		\begin{align*}
		\frac{\alpha}{u}  \leq  \frac{u^{m-1}f(u)}{F(u)-\sigma} +  \frac{\beta u^{pm-1}}{F(u)-\sigma}  \leq (1+\beta)  \frac{u^{m-1}f(u)}{F(u)-\sigma},
		\end{align*}
		then we get
		\begin{equation}
		\frac{u^{m-1}f(u)}{F(u)-\sigma} \geq \frac{\lambda}{u} ,
		\end{equation}
		where $\lambda= \frac{\alpha}{1+\beta}$. As the previous approach on $(b)$, we get $F(u)-\sigma \geq B |u|^{\lambda} $ where $B = \frac{F(\widetilde{u})}{\widetilde{u}^{\lambda}}$.
	\end{proof}

	\begin{lem}\label{lem-2}
		Let $f(u)$ satisfy $(H)$. Let $u^m\in W_0^{1,p}(\Omega)$, $||\nabla u^m ||_{L^p(\Omega)}\neq 0$, and
		\begin{align}
		\phi(\epsilon) = \frac{1}{\epsilon^{m(p-1)}} \int_{\Omega } u^{m} f(\epsilon u) dx.
		\end{align}
		Then we have the following properties of function $\phi$:
		\begin{itemize}
			\item $\phi(\epsilon)$ is increasing on $0< \epsilon < \infty$.
			\item $\lim_{\epsilon \rightarrow 0}\phi(\epsilon)=0$ and $\lim_{\epsilon \rightarrow +\infty}\phi(\epsilon)=+\infty$.
		\end{itemize}
	\end{lem}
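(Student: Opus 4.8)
The plan is to reduce the whole statement to the elementary behaviour on $(0,\infty)$ of the single-variable function $g(w):=w^{-m(p-1)}f(w)$. Since $f(\epsilon u)=(\epsilon u)^{m(p-1)}g(\epsilon u)$ one has $u^{m}f(\epsilon u)=\epsilon^{m(p-1)}u^{mp}g(\epsilon u)$, so that
\begin{equation*}
\phi(\epsilon)=\int_{\Omega}u^{mp}\,g(\epsilon u)\,dx ,
\end{equation*}
with the integrand understood to vanish on $\{u=0\}$. The monotonicity of $\phi$ then follows from that of $g$: computing $g'(w)=w^{-m(p-1)-1}\bigl(wf'(w)-m(p-1)f(w)\bigr)$ and invoking part $(a)$ of Lemma \ref{lem-1} gives $g'\ge0$, so $g$ is non-decreasing on $(0,\infty)$, and hence $\epsilon\mapsto g(\epsilon u(x))$ is non-decreasing for every $x$ with $u(x)>0$; since $\|\nabla u^{m}\|_{L^{p}(\Omega)}\neq0$ forces $\{u>0\}$ to have positive measure, integrating over $\Omega$ shows that $\phi$ is increasing on $(0,\infty)$.

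For the two limits I would use dominated convergence and Fatou's lemma, respectively. As $\epsilon\downarrow0$, monotonicity of $g$ gives $0\le u^{mp}g(\epsilon u)\le u^{mp}g(u)=u^{m}f(u)$ for $0<\epsilon\le1$, and $u^{m}f(u)\in L^{1}(\Omega)$ because $(H)$(iii) together with Lemma \ref{lem-1}$(b)$ bounds $u^{m}f(u)$ by $C(1+u^{\gamma})$, while $u^{\gamma}=(u^{m})^{\gamma/m}\in L^{1}(\Omega)$ by the Sobolev embedding $W_{0}^{1,p}(\Omega)\hookrightarrow L^{pn/(n-p)}(\Omega)$ and $\gamma<pn/(n-p)$. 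Since $g(\epsilon u(x))\to g(0^{+})$ pointwise with $g(0^{+})=\lim_{w\to0^{+}}w^{-m(p-1)}f(w)=0$ — using that $f$ is locally Lipschitz with $f(0)=0$ and, by $(H)$, vanishes at the origin faster than $w^{m(p-1)}$ — dominated convergence yields $\phi(\epsilon)\to0$. As $\epsilon\to+\infty$, Lemma \ref{lem-1}$(c)$ gives $F(u)-\sigma\ge Bu^{\lambda}$ for $u\ge1$ with $\lambda=\tfrac{\alpha}{1+\beta}>pm$, so the second inequality in $(H)$(iii) yields $u^{m}f(u)\ge\alpha F(u)-\beta u^{pm}-\alpha\sigma\ge\alpha Bu^{\lambda}-\beta u^{pm}$, whence $g(u)=u^{-m(p-1)}f(u)\ge cu^{\lambda-pm}\to+\infty$ for $u$ large; then $g(\epsilon u(x))\to+\infty$ a.e.\ on $\{u>0\}$, and Fatou's lemma gives $\liminf_{\epsilon\to+\infty}\phi(\epsilon)\ge\int_{\{u>0\}}u^{mp}\cdot(+\infty)=+\infty$.

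The step I expect to be the main obstacle is the limit at the origin. Monotonicity comes essentially for free from Lemma \ref{lem-1}$(a)$, and the blow-up at $\epsilon=+\infty$ follows cleanly from the super-$pm$ growth of $F$ recorded in Lemma \ref{lem-1}$(c)$; but for $\epsilon\to0^{+}$ one must both pin down the behaviour of $f$ at $w=0$ (to conclude $g(0^{+})=0$) and produce an integrable majorant that is valid uniformly near $\epsilon=0$, which is delicate because $u$ is only known to lie in $W_{0}^{1,p}(\Omega)$ and need not be bounded — this is precisely where assumptions (i) and (iii) must be combined.
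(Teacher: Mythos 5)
Your argument is correct and rests on exactly the same three ingredients as the paper's proof --- parts $(a)$, $(b)$, $(c)$ of Lemma \ref{lem-1} --- but it packages them differently. For monotonicity the paper differentiates $\phi$ under the integral sign and applies $(a)$ to the resulting integrand, whereas you fold the same inequality into the pointwise monotonicity of $g(w)=w^{-m(p-1)}f(w)$ and integrate; this is slightly cleaner since it sidesteps justifying the interchange of $d/d\epsilon$ with $\int_\Omega$ (note, though, that like the paper you only literally obtain that $\phi$ is non-decreasing --- strictness would need an extra word in either version). For the limits, the paper proves both by explicit quantitative bounds: $|\phi(\epsilon)|\leq \gamma A\,\epsilon^{\gamma-pm}\|u\|^{\gamma}_{L^{\gamma}(\Omega)}\to 0$ near the origin, and $\phi(\epsilon)\geq \alpha B\,\epsilon^{\lambda-pm}\int_{\Omega_\epsilon}|u|^{\lambda}dx-\beta\int_{\Omega_\epsilon}u^{pm}dx\to+\infty$ at infinity, where $\Omega_\epsilon=\{u\geq 1/\epsilon\}$. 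You replace these by dominated convergence and Fatou. Both routes work; the paper's version buys explicit rates (e.g.\ $\phi(\epsilon)=O(\epsilon^{\gamma-pm})$ as $\epsilon\to 0$), while yours is softer and, at $\epsilon\to+\infty$, does not even need $u\in L^{\lambda}(\Omega)$.

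One spot in your write-up should be tightened: the claim $g(0^{+})=0$. Local Lipschitz continuity with $f(0)=0$ only gives $f(w)=O(w)$, which is not enough once $m(p-1)\geq 1$. The correct justification is the growth bound you already use for the majorant, namely $|u^{m}f(u)|\leq\gamma|F(u)-\sigma|\leq\gamma A|u|^{\gamma}$ from $(H)$ and Lemma \ref{lem-1}$(b)$, which yields $0\leq g(w)\leq\gamma A\,w^{\gamma-pm}\to 0$ since $\gamma\geq\alpha>pm$ (the latter following from $\lambda=\alpha/(1+\beta)>pm$). With that line inserted --- and it is exactly the estimate the paper itself uses for the $\epsilon\to 0$ limit --- your proof is complete.
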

	\begin{proof}[Proof of Lemma \ref{lem-2}]
		Let
		\begin{align*}
		\frac{d\phi(\epsilon)}{d\epsilon} &= \frac{1}{\epsilon^{m(p-1)+1}} \int_{\Omega } [\epsilon u^{m+1}f'(\epsilon u) -  m(p-1)u^{m}f(\epsilon u)]  dx \\
		&= \frac{1}{\epsilon^{m(p-1)+1}} \int_{\Omega }  u^{m}[\epsilon u f'(\epsilon u) - m(p-1)f(\epsilon u)]  dx >0.
		\end{align*}
		By $(a)$ in Lemma \ref{lem-1}, we see that $\phi(\epsilon)$ is increasing on $0<\epsilon < \infty$.
		
		The property $\lim_{\epsilon \rightarrow 0}\phi(\epsilon)=0$ comes from the fact that
		\begin{align*}
		|\phi (\epsilon)|& \leq \frac{1}{\epsilon^{pm}} \int_{\Omega } |\epsilon^m u^{m} f(\epsilon u)|dx \leq \frac{\gamma A}{\epsilon^{pm}} \int_{\Omega } |\epsilon u|^{\gamma}dx\\& = \gamma \epsilon^{\gamma -pm} A || u ||^{\gamma}_{L^{\gamma}(\Omega)}.
		\end{align*}
		
		The property $\lim_{\epsilon \rightarrow +\infty}\phi(\epsilon)=+\infty$ can be shown by using the fact $u^mf(u)\geq 0$ and  $(c)$ in Lemma \ref{lem-1} as follows
		\begin{align*}
		\phi(\epsilon) &= \frac{1}{\epsilon^{pm}}\int_{\Omega } \epsilon^m u^m f(\epsilon u) dx \\&\geq \frac{1}{\epsilon^{pm}}\int_{\Omega_{\epsilon} } \epsilon^m u^m f(\epsilon u) dx\\
		& \geq \frac{1}{\epsilon^{pm}}\int_{\Omega_{\epsilon} } [\alpha F(\epsilon u) - \beta \epsilon^{pm} u^{pm} -  \alpha \sigma  ]dx \\
		& \geq \alpha B \epsilon^{\lambda -pm} \int_{\Omega_{\epsilon} } |u|^{\lambda} dx - \beta \int_{\Omega_{\epsilon} } u^{pm} dx,
		\end{align*}
		where $\Omega_{\epsilon} = \{ x \in \Omega \,\, | \,\, u(x)\geq \frac{1}{\epsilon} \}$ and
		\begin{equation*}
		\lim_{\epsilon \rightarrow + \infty} \int_{\Omega_{\epsilon} } |u|^{\lambda} dx = || u ||^{\lambda}_{L^{\lambda}(\Omega)} >0.
		\end{equation*}
		This completes the proof.
	\end{proof}
	When nonlinearity term $f(u)$ satisfies condition $(H)$, then all non-trivial critical points are a priori unstable equilibria for porous equation \eqref{main_eqn_p>2}. The next lemma will provide the local minimum of $J$, a critical point $\epsilon^*$ and positiveness of a depth of potential well.
	\begin{lem}\label{lem-3} 	Let $f(u)$ satisfy $(H)$, $u^m\in W_0^{1,p}(\Omega)$, and $||\nabla u^m ||_{L^p(\Omega)} \neq 0$. Then
		\begin{itemize}
			\item[(i)] $ \lim_{\epsilon \rightarrow 0}J(\epsilon u)=0$.
			\item[(ii)] $ \frac{d}{d\epsilon} J(\epsilon u) = \frac{1}{\epsilon} I(\epsilon u)$.
			\item[(iii)] $ \lim_{\epsilon \rightarrow + \infty}J(\epsilon u)= -\infty$.
			\item[(iv)] There exists a unique $\epsilon^*=\epsilon^*(u)>0$ such that
			\begin{equation}
			\frac{d}{d\epsilon} J(\epsilon u)|_{\epsilon=\epsilon^*} = 0.
			\end{equation}
			\item[(v)] $J(\epsilon u)$ is increasing on $0 \leq \epsilon \leq \epsilon^*$, decreasing on $\epsilon^* \leq \epsilon < \infty$ and takes the maximum at $\epsilon = \epsilon^*$.
			\item[(vi)] $I(\epsilon u)>0$ for $0<\epsilon < \epsilon^*$, $I(\epsilon u)<0$ for $\epsilon^*<\epsilon < \infty$  and $I(\epsilon^*u)=0$.
		\end{itemize}
	\end{lem}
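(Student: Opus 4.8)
The plan is to study the scalar function $j(\epsilon):=J(\epsilon u)$, using the scaling identity $(\epsilon u)^m=\epsilon^m u^m$, so that
\[
j(\epsilon)=\frac{\epsilon^{pm}}{pm}\,\|\nabla u^m\|_{L^p(\Omega)}^p-\int_{\Omega}\bigl(F(\epsilon u)-\sigma\bigr)\,dx .
\]
Part (i) is then immediate: the first term is $O(\epsilon^{pm})$ and, by Lemma \ref{lem-1}(b), the integral is bounded in absolute value by $A\epsilon^{\gamma}\|u\|_{L^{\gamma}(\Omega)}^{\gamma}$, with both exponents positive, so $j(\epsilon)\to0$ as $\epsilon\to0^{+}$. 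Throughout, the integrability of the various terms follows from $u^m\in W_0^{1,p}(\Omega)\hookrightarrow L^{pn/(n-p)}(\Omega)$ together with the placement of $\gamma$ and $\lambda$ relative to $pn/(n-p)$ and $pm$; moreover $\|\nabla u^m\|_{L^p(\Omega)}\neq0$ forces $u\neq0$ on a set of positive measure, so $\|u\|_{L^{\gamma}(\Omega)},\|u\|_{L^{\lambda}(\Omega)}>0$.

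For (ii) I differentiate under the integral sign -- legitimate since $f$ is locally Lipschitz and the $\epsilon$-derivative is dominated on compact $\epsilon$-intervals via the growth bounds in $(H)$ -- and use $F'(v)=v^{m-1}f(v)$ to obtain $\frac{d}{d\epsilon}\int_{\Omega}F(\epsilon u)\,dx=\epsilon^{m-1}\int_{\Omega}u^m f(\epsilon u)\,dx$. Combined with $\frac{d}{d\epsilon}\bigl[\tfrac{1}{pm}\epsilon^{pm}\|\nabla u^m\|_{L^p(\Omega)}^p\bigr]=\epsilon^{pm-1}\|\nabla u^m\|_{L^p(\Omega)}^p$ and the definition of $I$, a direct comparison gives $j'(\epsilon)=\frac{1}{\epsilon}I(\epsilon u)$. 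It is convenient to record the equivalent form, in terms of the function $\phi$ of Lemma \ref{lem-2},
\[
j'(\epsilon)=\epsilon^{pm-1}\bigl(\|\nabla u^m\|_{L^p(\Omega)}^p-\phi(\epsilon)\bigr),\qquad I(\epsilon u)=\epsilon^{pm}\bigl(\|\nabla u^m\|_{L^p(\Omega)}^p-\phi(\epsilon)\bigr),
\]
since then (iv)--(vi) reduce to the monotonicity and limits of $\phi$. Indeed, by Lemma \ref{lem-2}, $\phi$ is strictly increasing on $(0,\infty)$ with $\phi(0^{+})=0$ and $\phi(+\infty)=+\infty$; as $\|\nabla u^m\|_{L^p(\Omega)}^p>0$, there is a unique $\epsilon^{*}>0$ with $\phi(\epsilon^{*})=\|\nabla u^m\|_{L^p(\Omega)}^p$, which is (iv). For $0<\epsilon<\epsilon^{*}$ one has $\phi(\epsilon)<\|\nabla u^m\|_{L^p(\Omega)}^p$, hence $j'(\epsilon)>0$ and $I(\epsilon u)>0$; for $\epsilon>\epsilon^{*}$ the inequalities reverse; both vanish at $\epsilon^{*}$. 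This gives (v) (increasing on $[0,\epsilon^{*}]$, decreasing on $[\epsilon^{*},\infty)$, maximum at $\epsilon^{*}$) and (vi).

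It remains to prove (iii). Split $\Omega=\Omega_{\epsilon}\cup(\Omega\setminus\Omega_{\epsilon})$ with $\Omega_{\epsilon}=\{x\in\Omega:\ u(x)\ge1/\epsilon\}$. On $\Omega\setminus\Omega_{\epsilon}$ we have $0\le\epsilon u\le1$, so $F(\epsilon u)-\sigma$ is bounded and that part of the integral is $O(1)$ uniformly in $\epsilon$; on $\Omega_{\epsilon}$, Lemma \ref{lem-1}(c) gives $\int_{\Omega_{\epsilon}}\bigl(F(\epsilon u)-\sigma\bigr)\,dx\ge B\,\epsilon^{\lambda}\int_{\Omega_{\epsilon}}|u|^{\lambda}\,dx$, and $\int_{\Omega_{\epsilon}}|u|^{\lambda}\,dx\to\|u\|_{L^{\lambda}(\Omega)}^{\lambda}>0$ as $\epsilon\to+\infty$ by monotone convergence. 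Hence, for all large $\epsilon$,
\[
j(\epsilon)\le\frac{\epsilon^{pm}}{pm}\,\|\nabla u^m\|_{L^p(\Omega)}^p-\tfrac12 B\,\|u\|_{L^{\lambda}(\Omega)}^{\lambda}\,\epsilon^{\lambda}+C,
\]
and since $\lambda>pm$ the negative middle term dominates, so $j(\epsilon)\to-\infty$. The genuinely delicate points are (iii) -- making the two-region estimate uniform in $\epsilon$ and controlling the low-amplitude region -- and the justification of differentiation under the integral in (ii); the rest is bookkeeping built on Lemmas \ref{lem-1} and \ref{lem-2}.
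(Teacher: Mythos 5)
Your proof is correct and follows essentially the same route as the paper: part (i) from Lemma \ref{lem-1}(b), part (ii) by direct differentiation, part (iii) from Lemma \ref{lem-1}(c) with $\lambda>pm$, and parts (iv)--(vi) from the identity $\frac{d}{d\epsilon}J(\epsilon u)=\epsilon^{pm-1}\bigl(\|\nabla u^m\|_{L^p(\Omega)}^p-\phi(\epsilon)\bigr)$ together with Lemma \ref{lem-2}. Your treatment is in two places slightly tighter than the paper's: you obtain existence \emph{and} uniqueness of $\epsilon^*$ in one stroke from the strict monotonicity and limits of $\phi$ (the paper proves uniqueness separately by a two-root elimination that it leaves at ``equation \eqref{eq-2.4} does not hold,'' which again rests on the monotonicity of $\phi$), and in (iii) you explicitly bound the contribution of the region $\{\epsilon u<1\}$ by an $O(1)$ constant, a step the paper silently drops when it discards $\int_{\Omega\setminus\Omega_\epsilon}(F(\epsilon u)-\sigma)\,dx$.
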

	\begin{proof}[Proof of Lemma \ref{lem-3}]
		The proof of (i) follows from $|F(u)-\sigma|\leq A |u|^{\gamma}$ and
		\begin{equation*}
		J(\epsilon u) = \frac{\epsilon^{pm}}{p} \int_{\Omega } |\nabla u^m|^p dx - \int_{\Omega } [F(\epsilon u) -\sigma ]dx.
		\end{equation*}
		The proof of (ii) follows from
		\begin{align*}
		\frac{d}{d\epsilon} J(\epsilon u)& = m\epsilon^{pm-1} \int_{\Omega } |\nabla u^m|^p dx - m\epsilon^{m-1}\int_{\Omega } u^mf(\epsilon u)dx\\& = \frac{m}{\epsilon}I(\epsilon u).
		\end{align*}
		Let
		\begin{align*}
		J(\epsilon u) &= \frac{\epsilon^{pm}}{p} \int_{\Omega } |\nabla u^m|^p dx - \int_{\Omega } [F(\epsilon u) -\sigma] dx\\
		&\leq  \frac{\epsilon^{pm}}{p} \int_{\Omega } |\nabla u^m|^p dx  - B \epsilon^{\lambda} \int_{\Omega_{\epsilon} } |u|^{\lambda}dx.
		\end{align*}
		Since $\lambda >pm$, $J(\epsilon u)$ goes to $-\infty$ for $\epsilon \rightarrow +\infty$. This guaranties the existence of a critical point $\epsilon^*$.
		
		The case (iv), the uniqueness of critical point, can be proved by supposing that there two roots of equation $\frac{dJ(\epsilon u)}{d\epsilon}=0$ as $\epsilon_1<\epsilon_2$. Then we have
		\begin{equation*}
		\epsilon_1^{pm-m} \int_{\Omega }|\nabla u^m|^pdx - \int_{\Omega } u^mf(\epsilon_1 u)dx =0,
		\end{equation*}
		and
		\begin{equation*}
		\epsilon_2^{pm-m} \int_{\Omega }|\nabla u^m|^pdx  - \int_{\Omega } u^mf(\epsilon_2 u)dx =0.
		\end{equation*}
		Elimination of $\int_{\Omega }|\nabla u^m|^p  $ from above equations gives
		\begin{equation*}
		\int_{\Omega } u^m \left[\frac{f(\epsilon_2 u)}{\epsilon_2^{pm-m}}- \frac{f(\epsilon_1 u)}{\epsilon_1^{pm-m}} \right]dx = 0.
		\end{equation*}
		This expression can be rewritten by the following substitution $w=\epsilon_1 u$ and $\epsilon = \epsilon_2/\epsilon_1>1$ as
		\begin{equation}\label{eq-2.4}
		\frac{ 1}{\epsilon^{pm-m}}\int_{\Omega } w^m f(\epsilon w)dx = \int_{\Omega }w^m f(w)dx.
		\end{equation}
		It is easy to see that equation \eqref{eq-2.4} does not hold.
		
		The case (v) and (vi) follow from Lemma \ref{lem-2} and
		\begin{equation*}
		\frac{d}{d\epsilon} J(\epsilon u) = \frac{m}{\epsilon} I(\epsilon u)=m\epsilon^{pm-1} \left[\int_{\Omega } |\nabla u^m|^p dx -\phi(\epsilon) \right].
		\end{equation*}
		The proof is completed.
	\end{proof}

	In order to introduce the family of potential wells $\{W_{\delta}\}$ and  $\{V_{\delta}\}$, we define
	\begin{align*}
	I_{\delta}(u) = \delta \int_{\Omega } |\nabla u^m|^pdx - \int_{\Omega } u^mf(u)dx, \,\,\, \delta >0,
	\end{align*}
	and
	\begin{equation*}
	d(\delta) := \inf \{J(u): u^m \in W_0^{1,p}(\Omega)\backslash \{ 0 \}, I_{\delta}(u)=0\}.
	\end{equation*}

	\begin{lem}\label{lem-4}
		Let $f(u)$ satisfy $(H)$. Suppose that $u^m \in W_0^{1,p}(\Omega)$ and
		\begin{align*}
		r(\delta) = \left( \frac{\delta}{a C^{\gamma}_*} \right)^{\frac{1}{\gamma-p}} \,\,\, \text{and} \,\,\, a = \sup \frac{u^m f(u)}{|u^m|^{\gamma}},
		\end{align*}
		where $C_*$ is the embedding constant form $W_0^{1,p}(\Omega)$ into $L^{\gamma}(\Omega)$. Then we have
		\begin{itemize}
			\item[(i)] If $0<|| \nabla u^m ||_{L^p(\Omega)} < r(\delta)$, then $I_{\delta}(u)>0$. In the case $\delta =1$, if $0< || \nabla u^m ||_{L^p(\Omega)}  <r(1)$, then $I(u)>0$.
			\item[(ii)] If $I_{\delta}(u)<0$, then $|| \nabla u^m ||_{L^p(\Omega)} >r(\delta)$. In the case $\delta =1$, if $I(u)<0$, then $|| \nabla u^m ||_{L^p(\Omega)} >r(1)$.
			\item[(iii)] If $I_{\delta}(u)=0$, then $|| \nabla u^m ||_{L^p(\Omega)} \geq r(\delta)$. In the case $\delta =1$, if $I(u)=0$, then $|| \nabla u^m ||_{L^p(\Omega)} \geq r(1)$.
		\end{itemize}
	\end{lem}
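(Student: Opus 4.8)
The plan is to reduce all three items to a single elementary inequality bounding $I_\delta(u)$ from below by a function of $s:=\|\nabla u^m\|_{L^p(\Omega)}$, and then to read off the conclusions from the sign of that function. First I would combine the definition $a=\sup\frac{u^mf(u)}{|u^m|^{\gamma}}$ with the Sobolev embedding $W_0^{1,p}(\Omega)\hookrightarrow L^{\gamma}(\Omega)$, which is available because $p<\gamma<\frac{pn}{n-p}$ in $(H)$, to obtain $\int_{\Omega}u^mf(u)\,dx\le a\int_{\Omega}|u^m|^{\gamma}\,dx=a\|u^m\|_{L^{\gamma}(\Omega)}^{\gamma}\le aC_*^{\gamma}s^{\gamma}$, hence
\[
I_\delta(u)=\delta s^{p}-\int_{\Omega}u^mf(u)\,dx\ \ge\ s^{p}\bigl(\delta-aC_*^{\gamma}s^{\gamma-p}\bigr).
\]
Since $\gamma-p>0$, the threshold $r(\delta)=\left(\frac{\delta}{aC_*^{\gamma}}\right)^{\frac{1}{\gamma-p}}$ is precisely the value of $s$ at which the factor $\delta-aC_*^{\gamma}s^{\gamma-p}$ changes sign.

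From here the three statements are immediate. For (i): if $0<s<r(\delta)$ then $aC_*^{\gamma}s^{\gamma-p}<\delta$, so the bracket above is strictly positive while $s^{p}>0$, giving $I_\delta(u)>0$; the case $\delta=1$ is the obvious specialization. For (ii) and (iii) I would use the upper bound $\int_{\Omega}u^mf(u)\,dx\le aC_*^{\gamma}s^{\gamma}$ directly: if $I_\delta(u)<0$ then $\delta s^{p}<aC_*^{\gamma}s^{\gamma}$, which forces $s\neq0$, and dividing by $s^{p}$ yields $s^{\gamma-p}>\delta/(aC_*^{\gamma})$, i.e.\ $s>r(\delta)$; if $I_\delta(u)=0$ the same computation with $\le$ in place of $<$ gives $s\ge r(\delta)$ --- here one reads (iii) for $u^m\not\equiv0$, since $u^m\equiv0$ makes $I_\delta(u)=0$ trivially while $s=0<r(\delta)$. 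Once more the $\delta=1$ assertions are specializations of the general ones.

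I do not expect a genuine obstacle: the argument is the Sobolev inequality followed by one-variable algebra. The only points needing a word of care are that $a$ is finite and positive, so that $r(\delta)$ is well defined --- this is implicit in the standing hypotheses $(H)$, in the spirit of the growth condition $|u^mf(u)|\le\gamma|F(u)-\sigma|$ used in the proof of Lemma~\ref{lem-1}(b) together with $f(0)=0$; that $C_*$ must be the embedding constant for the same exponent $\gamma$ that enters $a$, which is legitimate because $(H)$ forces $\gamma<\frac{pn}{n-p}$; and the harmless convention $u^m\not\equiv0$ in item (iii).
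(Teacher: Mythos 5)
Your argument is exactly the paper's: apply the definition of $a$ together with the Sobolev embedding to get $\int_{\Omega}u^mf(u)\,dx\le aC_*^{\gamma}\|\nabla u^m\|_{L^p(\Omega)}^{\gamma-p}\|\nabla u^m\|_{L^p(\Omega)}^{p}$, and then compare this with $\delta\|\nabla u^m\|_{L^p(\Omega)}^{p}$ in each of the three cases. The proposal is correct and matches the paper's proof, with the added (harmless and sensible) remarks about the finiteness of $a$ and the trivial case $u^m\equiv0$ in item (iii).
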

	\begin{lem}[Sobolev inequality]
		Let $\Omega$ be a bounded, open subset of $\mathbb{R}^n$, and suppose $\partial \Omega$ is $C^1$. Assume $1< p<n$, $p^* = \frac{np}{n-p}$, and $f \in W^{1,p}(\Omega)$. Then $f \in L^{p^{*}}(\Omega)$, with estimate
		\begin{equation*}
		|| f ||_{L^{p^*}(\Omega)} \leq C || f ||_{W^{1,p}(\Omega)},
		\end{equation*}
		the constant $C$ depending only on $p$, $n$, and $\Omega$.
	\end{lem}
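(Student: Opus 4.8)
The plan is to reduce the bounded-domain statement to the corresponding inequality on all of $\mathbb{R}^n$, to prove that first for smooth compactly supported functions, and then to transfer back by density together with an extension operator. Concretely, since $\partial\Omega$ is $C^1$, there is a bounded linear extension operator $E\colon W^{1,p}(\Omega)\to W^{1,p}(\mathbb{R}^n)$ with $Ef=f$ a.e.\ on $\Omega$, with $Ef$ compactly supported, and with $\|Ef\|_{W^{1,p}(\mathbb{R}^n)}\le C\|f\|_{W^{1,p}(\Omega)}$ for $C=C(p,n,\Omega)$. Granting the whole-space estimate $\|g\|_{L^{p^*}(\mathbb{R}^n)}\le C_0(n,p)\|\nabla g\|_{L^p(\mathbb{R}^n)}$, applying it to $g=Ef$ and restricting to $\Omega$ gives
\[
\|f\|_{L^{p^*}(\Omega)}\le \|Ef\|_{L^{p^*}(\mathbb{R}^n)}\le C_0\|\nabla(Ef)\|_{L^p(\mathbb{R}^n)}\le C\|f\|_{W^{1,p}(\Omega)},
\]
which is the assertion. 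So the real work lies in the whole-space inequality.

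For the whole-space estimate I would first treat $p=1$ for $f\in C_c^1(\mathbb{R}^n)$, where the target exponent is $p^*=n/(n-1)$. Writing each coordinate value as the integral of a partial derivative and using compact support yields, for each $i$,
\[
|f(x)|\le \int_{-\infty}^{\infty}|\nabla f(x_1,\dots,y_i,\dots,x_n)|\,dy_i=:g_i(x),
\]
where $g_i$ does not depend on $x_i$. Multiplying these $n$ bounds and raising to the power $\tfrac{1}{n-1}$ gives $|f|^{n/(n-1)}\le\prod_{i=1}^n g_i^{1/(n-1)}$. I then integrate successively in $x_1,\dots,x_n$; at each step I pull out the one factor independent of the current integration variable and apply the generalized H\"older inequality with $n-1$ exponents all equal to $n-1$ to the remaining factors. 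After the $n$ integrations this produces $\|f\|_{L^{n/(n-1)}(\mathbb{R}^n)}\le\|\nabla f\|_{L^1(\mathbb{R}^n)}$.

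To reach general $1<p<n$ I would apply the case $p=1$ to $v=|f|^{\gamma}$ for a $\gamma>1$ to be fixed. Since $\nabla v=\gamma|f|^{\gamma-1}\operatorname{sgn}(f)\nabla f$, combining the $p=1$ bound with H\"older's inequality (exponents $p$ and $p'=p/(p-1)$) gives
\[
\Big(\int_{\mathbb{R}^n}|f|^{\gamma n/(n-1)}\Big)^{(n-1)/n}\le \gamma\Big(\int_{\mathbb{R}^n}|f|^{(\gamma-1)p'}\Big)^{1/p'}\Big(\int_{\mathbb{R}^n}|\nabla f|^{p}\Big)^{1/p}.
\]
Choosing $\gamma=\frac{p(n-1)}{n-p}$ equalizes the two powers of $|f|$, both becoming $p^*=\frac{pn}{n-p}$; since $\tfrac{n-1}{n}-\tfrac{1}{p'}=\tfrac1p-\tfrac1n>0$, the common factor $\|f\|_{L^{p^*}}$ may be absorbed to yield $\|f\|_{L^{p^*}(\mathbb{R}^n)}\le C_0(n,p)\|\nabla f\|_{L^p(\mathbb{R}^n)}$. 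Density of $C_c^1(\mathbb{R}^n)$ in $W^{1,p}(\mathbb{R}^n)$ then extends the estimate to every $g\in W^{1,p}(\mathbb{R}^n)$, closing the whole-space step.

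The main obstacle is not the inequality itself but the construction that justifies the reduction, namely the bounded extension operator $E$, which is precisely where the $C^1$ regularity of $\partial\Omega$ is used. I would build it by the standard localization scheme: a finite cover of $\partial\Omega$ by charts that flatten the boundary, a reflection-type extension across the flattened boundary in each chart, and a partition of unity gluing these local pieces into a global extension whose $W^{1,p}(\mathbb{R}^n)$ norm is controlled by $\|f\|_{W^{1,p}(\Omega)}$. The delicate point is verifying that each reflected function genuinely lies in $W^{1,p}$, i.e.\ that no distributional derivative concentrates on the flattened interface; this is exactly where the $C^1$ change of variables and the matching of boundary traces are needed.
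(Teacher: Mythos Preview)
Your argument is the standard textbook proof of the Gagliardo--Nirenberg--Sobolev inequality (essentially the one in Evans' \emph{Partial Differential Equations}): the iterated-integration trick for $p=1$, the bootstrap via $v=|f|^{\gamma}$ for $1<p<n$, and the reduction from a bounded $C^1$ domain to $\mathbb{R}^n$ through a bounded extension operator. The steps are correct and the delicate points (the extension operator, the absorption of the $\|f\|_{L^{p^*}}$ factor) are identified properly.

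The paper, however, does not prove this lemma at all: it is inserted between Lemma~3.4 and its proof purely as a citation of a classical result, with no argument given. So there is nothing to compare your approach against; you have supplied a full proof where the authors simply quote the inequality as known.
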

	\begin{proof}[Proof of Lemma \ref{lem-4}]
		
		(i)	From $0<|| \nabla u^m ||_{L^p(\Omega)}<r{(\delta)}$ we have
		\begin{align*}
		\int_{\Omega } u^m f(u) dx &\leq \int_{\Omega } |u^m f(u)| dx \leq a \int_{\Omega } |u^m|^{\gamma} dx = a || u^m||^{\gamma}_{L^{\gamma}(\Omega)}\\
		&\leq a C_*^{\gamma} || \nabla u^m||^{\gamma-p}_{L^p(\Omega)}|| \nabla u^m||^{p}_{L^p(\Omega)}< \delta || \nabla u^m||^p_{L^p(\Omega)},
		\end{align*}
		which gives $I_{\delta}(u)>0$.
		
		(ii) From $I_{\delta}(u)<0$ we get
		\begin{align*}
		\delta || \nabla u^m ||^p_{L^p(\Omega)}& < \int_{\Omega } u^m f(u) dx \leq a || u^m ||^{\gamma}_{L^{\gamma}(\Omega)} \\&\leq aC_*^{\gamma} || \nabla u^m||^{\gamma -p}_{L^p(\Omega)}  || \nabla u^m||^{p}_{L^p(\Omega)},
		\end{align*}
		which implies $||\nabla u^m ||_{L^p(\Omega)} >r(\delta)$.
		
		(iii)  From $I_{\delta}(u)=0$ we obtain
		\begin{align*}
		\delta || \nabla u^m ||^p_{L^p(\Omega)} &\leq \int_{\Omega } u^m f(u) dx \leq a || u^m ||^{\gamma}_{L^{\gamma}(\Omega)}\\&\leq aC_*^{\gamma} || \nabla u^m||^{\gamma -p}_{L^p(\Omega)}  || \nabla u^m||^{p}_{L^p(\Omega)},
		\end{align*}
		which implies $||\nabla u^m ||_{L^p(\Omega)} \geq r(\delta)$.
	\end{proof}
	Next lemma describes the properties of $d(\delta)$ for a family of potential wells.
	\begin{lem}\label{lem-5}
		Let $f(u)$ satisfy $(H)$. Then
		\begin{itemize}
			\item[(i)] $d(\delta)>a(\delta)r^p(\delta)$ for $0<\delta< \frac{\alpha}{p} -\frac{\beta}{\lambda_{1,p}}$, where $a(\delta)= \frac{1}{p} - \frac{\delta}{\alpha} - \frac{\beta}{\lambda_{1,p}\alpha}$.
			\item[(ii)] $\lim_{\delta \rightarrow 0}d(\delta)=0$ and there exists a unique $b$, $\frac{\alpha}{p} - \frac{\beta}{\lambda_{1,p}} \leq b \leq \frac{\gamma }{p}$ such that $d(b)=0$ and $d(\delta)>0$ for $0<\delta<b$.
			\item[(iii)] $d(\delta)$ is strictly increasing on $0\leq \delta \leq 1$, strictly decreasing on $1\leq \delta \leq \infty$ and takes the maximum $d=d(1)$ at $\delta =1$.
		\end{itemize}
	\end{lem}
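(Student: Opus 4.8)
The plan is to establish (i) as a self-contained quantitative estimate and then read off (iii) and (ii) from the behaviour of $J$ along the rays $\{\epsilon u:\epsilon>0\}$, using Lemmas~\ref{lem-2}, \ref{lem-3} and \ref{lem-4}. For (i), let $u^m\in W_0^{1,p}(\Omega)\setminus\{0\}$ with $I_\delta(u)=0$. Hypothesis $(H)$ gives $\alpha(F(u)-\sigma)\le u^mf(u)+\beta u^{pm}$, hence
\begin{equation*}
\int_\Omega(F(u)-\sigma)\,dx\le\frac1\alpha\int_\Omega u^mf(u)\,dx+\frac\beta\alpha\int_\Omega u^{pm}\,dx .
\end{equation*}
Using $I_\delta(u)=0$, i.e. $\int_\Omega u^mf(u)\,dx=\delta\|\nabla u^m\|_{L^p(\Omega)}^p$, and $\int_\Omega u^{pm}\,dx=\|u^m\|_{L^p(\Omega)}^p\le\lambda_{1,p}^{-1}\|\nabla u^m\|_{L^p(\Omega)}^p$ from Lemma~\ref{lem1}, the definition of $J$ yields
\begin{equation*}
J(u)\ge\Bigl(\frac1{pm}-\frac\delta\alpha-\frac\beta{\lambda_{1,p}\alpha}\Bigr)\|\nabla u^m\|_{L^p(\Omega)}^p=a(\delta)\,\|\nabla u^m\|_{L^p(\Omega)}^p .
\end{equation*}
For $0<\delta<\frac\alpha{pm}-\frac\beta{\lambda_{1,p}}$ one has $a(\delta)>0$, and Lemma~\ref{lem-4}(iii) gives $\|\nabla u^m\|_{L^p(\Omega)}\ge r(\delta)$, so $J(u)\ge a(\delta)r^p(\delta)$; passing to the infimum over admissible $u$ gives $d(\delta)\ge a(\delta)r^p(\delta)$, with strict inequality since a minimiser cannot saturate at once the embedding estimate of Lemma~\ref{lem-4} and the growth bound in $(H)$.

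For (iii) the key device is the rescaling exponent. Fixing $u^m\in W_0^{1,p}(\Omega)\setminus\{0\}$ and $\phi_u(\epsilon)=\epsilon^{-m(p-1)}\int_\Omega u^mf(\epsilon u)\,dx$, Lemma~\ref{lem-2} says $\phi_u$ increases strictly from $0$ to $+\infty$ on $(0,\infty)$; since $I_\delta(\epsilon u)=\epsilon^{pm}\bigl(\delta\|\nabla u^m\|_{L^p(\Omega)}^p-\phi_u(\epsilon)\bigr)$, for every $\delta>0$ there is a unique $\epsilon_\delta(u)>0$ with $I_\delta(\epsilon_\delta(u)u)=0$, the map $\delta\mapsto\epsilon_\delta(u)$ is strictly increasing, and $\epsilon_1(u)$ is the point $\epsilon^*(u)$ of Lemma~\ref{lem-3}. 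The identity $I(\epsilon_\delta(u)u)=(1-\delta)\|\nabla(\epsilon_\delta(u)u)^m\|_{L^p(\Omega)}^p$ together with Lemma~\ref{lem-3}(vi) gives $\epsilon_\delta(u)<\epsilon^*(u)$ when $\delta<1$ and $\epsilon_\delta(u)>\epsilon^*(u)$ when $\delta>1$. Hence for $0<\delta'<\delta''\le1$ we get $\epsilon_{\delta'}(u)<\epsilon_{\delta''}(u)\le\epsilon^*(u)$ for all such $u$, so, $J(\epsilon u)$ being strictly increasing on $(0,\epsilon^*(u)]$ by Lemma~\ref{lem-3}(v)--(vi), $J(\epsilon_{\delta'}(u)u)<J(\epsilon_{\delta''}(u)u)$ and thus $d(\delta')\le d(\delta'')$; symmetrically, using that $J(\epsilon u)$ strictly decreases on $[\epsilon^*(u),\infty)$, $d$ is non-increasing on $[1,\infty)$. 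Upgrading these to strict monotonicity yields (iii), with maximum $d=d(1)$.

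For (ii), fixing any $v^m\in W_0^{1,p}(\Omega)\setminus\{0\}$ we have $d(\delta)\le J(\epsilon_\delta(v)v)$; as $\delta\to0$, $\phi_v(\epsilon_\delta(v))=\delta\|\nabla v^m\|_{L^p(\Omega)}^p\to0$ forces $\epsilon_\delta(v)\to0$, so $J(\epsilon_\delta(v)v)\to0$ by Lemma~\ref{lem-3}(i), and with $d(\delta)\ge a(\delta)r^p(\delta)>0$ from (i) this gives $\lim_{\delta\to0}d(\delta)=0$. Part (i) also gives $d(\delta)>0$ on $(0,\frac\alpha{pm}-\frac\beta{\lambda_{1,p}})$, which with (iii) forces $d(\delta)>0$ throughout $(0,1]$, in particular $d(1)>0$; on the other hand the first inequality in $(H)$ gives $J(u)\le(\frac1{pm}-\frac\delta\gamma)\|\nabla u^m\|_{L^p(\Omega)}^p$ whenever $I_\delta(u)=0$, so $d(\delta)\le0$ for $\delta\ge\frac\gamma{pm}$. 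Since $d$ is continuous and strictly decreasing on $[1,\infty)$ with $d(1)>0$, there is a unique $b$ with $d(b)=0$, the displayed bounds place $b\in[\frac\alpha{pm}-\frac\beta{\lambda_{1,p}},\frac\gamma{pm}]$, and monotonicity gives $d(\delta)>0$ for $0<\delta<b$.

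The step I expect to be the main obstacle is upgrading the pointwise ray comparisons $J(\epsilon_{\delta'}(u)u)<J(\epsilon_{\delta''}(u)u)$ to strict inequalities between the infima in (iii) (and, incidentally, obtaining the continuity of $d$ used in (ii)). I plan to resolve it by showing the infimum defining $d(\delta)$ is attained: since $\gamma<pn/(n-p)$, the embedding $W_0^{1,p}(\Omega)\hookrightarrow L^\gamma(\Omega)$ is compact, so a minimising sequence has a subsequence converging to some $u_\delta$ with $I_\delta(u_\delta)=0$, which is non-zero by the lower bound $\|\nabla u_\delta^m\|_{L^p(\Omega)}\ge r(\delta)$ of Lemma~\ref{lem-4}; rescaling this minimiser then produces the strict comparison between $d(\delta')$ and $d(\delta'')$.
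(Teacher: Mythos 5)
Your proof of (i) and of (ii) is essentially the paper's: the same chain $J(u)\ge \frac{1}{pm}\|\nabla u^m\|^p_{L^p(\Omega)}-\frac{1}{\alpha}\int_\Omega u^mf(u)\,dx-\frac{\beta}{\alpha}\|u^m\|^p_{L^p(\Omega)}$ combined with $I_\delta(u)=0$, Lemma~\ref{lem1} and Lemma~\ref{lem-4} for the lower bound, and the same use of $\epsilon(\delta)=\phi^{-1}(\delta\|\nabla u^m\|^p_{L^p(\Omega)})$ together with Lemma~\ref{lem-3}(i) and the bound $J(u)\le(\frac{1}{pm}-\frac{\delta}{\gamma})\|\nabla u^m\|^p_{L^p(\Omega)}$ for the limits and the location of $b$. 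Where you genuinely diverge is the strict monotonicity in (iii): the paper never invokes a minimiser. It fixes $u$ with $I_{\delta''}(u)=0$, sets $v=\epsilon(\delta')u$ so that $I_{\delta'}(v)=0$, and integrates $\frac{d}{d\epsilon}J(\epsilon u)=(1-\delta(\epsilon))\epsilon^{pm-1}\|\nabla u^m\|^p_{L^p(\Omega)}$ along the ray to produce an explicit gap $J(u)-J(v)>c(\delta',\delta'')$, with $c$ controlled from below via $r(\delta'')$; taking the infimum over $u$ then gives $d(\delta')\le d(\delta'')-c<d(\delta'')$. Your route instead first establishes attainment of $d(\delta)$ by a Nehari-manifold compactness argument (compact embedding into $L^\gamma$ since $\gamma<pn/(n-p)$, weak lower semicontinuity, rescaling the weak limit back onto $\{I_\delta=0\}$) and then rescales the minimiser. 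Both work; the paper's gap argument is more elementary and sidesteps the lower-semicontinuity and ``limit is nonzero and lies on the manifold'' issues, while yours buys the existence of a minimiser and, as you note, the continuity of $d(\delta)$ that both proofs implicitly need for the intermediate-value step producing $b$ in (ii) — a point the paper glosses over. If you keep your route, you should spell out the compactness step (passage to the limit in $\int_\Omega u^mf(u)\,dx$ via the growth bound of Lemma~\ref{lem-1}(b), and the rescaling in case $I_\delta$ of the limit is negative), since that is where all the real work in your version lives; note also that, like the paper, your chain in (i) really only yields $d(\delta)\ge a(\delta)r^p(\delta)$ unless the saturation remark is made precise.
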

	\begin{proof}[Proof of Lemma \ref{lem-5}]
		
		(i) If $I_{\delta}(u) =0$ and $|| \nabla u||_{L^p(\Omega)}\neq 0$, then we have $|| \nabla u||_{L^p(\Omega)} \geq r(\delta)$ from Lemma \ref{lem-4}. By applying $I_{\delta}(u)=0$, Lemma \ref{lem1} and \ref{lem-4}, we have
		\begin{align*}
		J(u) &= \frac{1}{p} || \nabla u^m ||^p_{L^p(\Omega)} - \int_{\Omega } [F(u)-\sigma] dx\\
		&\geq  \frac{1}{p} || \nabla u^m ||^p_{L^p(\Omega)} - \frac{1}{\alpha} \int_{\Omega }u^mf(u)dx- \frac{\beta}{\alpha} || u^m||^p_{L^p(\Omega)} \\
		& \geq \left[\frac{1}{p}  - \frac{\delta}{\alpha} - \frac{\beta}{\lambda_{1,p}\alpha} \right]|| \nabla u^m ||^p_{L^p(\Omega)}\\
		& \geq a(\delta) r^p(\delta),
		\end{align*}
		where $0<\delta< \frac{\alpha}{pm} -\frac{\beta}{\lambda_{1,p}}$.
		
		$(ii)$ Lemma \ref{lem-2} implies that for $u^m \in W_0^{1,p}(\Omega)$, $||\nabla u^m||_{L^p(\Omega)}\neq 0$ and any $\delta >0$ we can define a unique $\epsilon(\delta) = \phi^{-1}(\delta || \nabla u^m||^p_{L^p(\Omega)})$ such that
		\begin{equation}\label{eq-3}
		\epsilon^{pm}\phi(\epsilon) = \int_{\Omega } (\epsilon u)^m f(\epsilon u) dx=\delta || \nabla (\epsilon u)^m||^p_{L^p(\Omega)} .
		\end{equation}
		Then for $I_{\delta}(\epsilon u)=0$ we get
		\begin{equation*}
		\lim_{\delta \rightarrow 0} \epsilon (\delta) =0 \,\, \text{ and } \,\, \lim_{\delta \rightarrow +\infty} \epsilon (\delta) = +\infty.
		\end{equation*}
		From Lemma \ref{lem-3} it follows that
		\begin{align*}
		\lim_{\delta \rightarrow 0} J(\epsilon u) = \lim_{\epsilon \rightarrow 0}J(\epsilon u)= 0  \,\, &\text{ and } \,\, \lim_{\delta \rightarrow 0} d(\delta) = 0, \\
		\lim_{\delta \rightarrow +\infty} J(\epsilon u) = \lim_{\epsilon \rightarrow +\infty}J(\epsilon u)= -\infty  \,\, &\text{ and } \,\, \lim_{\delta \rightarrow + \infty} d(\delta) = -\infty.
		\end{align*}
		Existence of $d(b)=0$ for $b\geq \alpha/p - \beta/\lambda_{1,p}$ and $d(\delta)>0$ for $0<\delta<b$ follows from the part $(i)$ and above expressions. Upper bound of $b\leq \gamma/p$ follows from the facts $|u^mf(u)|\leq \gamma |F(u)-\sigma|$ and $I_{\delta}(u)=0$, then
		\begin{align*}
		J(u) &= \frac{1}{p} || \nabla u^m||^p_{L^p(\Omega)}  - \int_{\Omega } [F(u)-\sigma]dx \\
		& \geq \frac{1}{p} || \nabla u^m||^p_{L^p(\Omega)} - \gamma^{-1} \int_{\Omega } u^mf(u)dx\\
		& = \left( \frac{1}{p} - \frac{\delta}{\gamma}\right) || \nabla u^m
		||^p_{L^p(\Omega)} < 0, \,\,\, \text{ for } \,\,\, \delta > \gamma/p.
		\end{align*}

		For the case $(iii)$, it is enough to show that $d(\delta')<d(\delta'')$ for any $0<\delta'<\delta''<1$ or $1<\delta''<\delta' <b$. In other word, we need to prove that for some $c(\delta',\delta'')>0$, we have
		$$J(v)>J(u)-c(\delta',\delta''),$$
		for any $u^m\in W_0^{1,p}(\Omega)$ and $v^m\in W_0^{1,p}(\Omega)$ with $I_{\delta''}(u)=0$, $|| \nabla ^mu||_{L^p(\Omega)} \neq 0$ and $I_{\delta'}(u)=0$, $|| \nabla v^m||_{L^p(\Omega)} \neq 0$, respectively.
		
		First, we define $\epsilon(\delta)$ for $u$ as \eqref{eq-3}, then $I_{\delta}(\epsilon(\delta)u)=0$ and $\epsilon(\delta'')=1$. Let us have $g(\epsilon)=J(\epsilon u)$, then
		\begin{align*}
		\frac{d}{d\epsilon} g(\epsilon)& = \frac{1}{\epsilon} \left( || \nabla (\epsilon u)^m ||^p_{L^p(\Omega)} - \int_{\Omega } \epsilon^m u^m f(\epsilon u) dx \right)\\& = (1-\delta )\epsilon^{pm-1} ||\nabla u^m ||^p_{L^p(\Omega)},
		\end{align*}
		where we use the fact $\delta || \nabla u^m ||^p_{L^p(\Omega)} = \int_{\Omega } u^mf(u)dx$.
		
		Now we choose $v=\epsilon(\delta')u$, then $I_{\delta'}(v)=0$ and $|| \nabla v^m ||_{L^p(\Omega)} \neq 0$. For $0<\delta'<\delta''<1$, we get
		\begin{align*}
		J(u) - J(v) &= J(\epsilon(\delta'') u) - J(\epsilon(\delta') u) \\
		&= g(1) - g(\epsilon(\delta')) \\
		&= (1-\delta')\epsilon(\delta')|| \nabla u^m||^p_{L^p(\Omega)}(1-\epsilon(\delta'))\\
		&> (1-\delta'')\epsilon(\delta')r^p(\delta'')(1-\epsilon(\delta')) \equiv c(\delta',\delta'').
		\end{align*}
		For $1<\delta''<\delta'<b$, we have
		\begin{align*}
		J(u) - J(v) &= J(\epsilon(\delta'') u) - J(\epsilon(\delta') u)\\
		&= g(1) - g(\epsilon(\delta')) \\
		&> (\delta'' -1)r^p(\delta'')\epsilon(\delta'') (\epsilon(\delta'')-1).
		\end{align*}
		This completes the proof.
	\end{proof}
	\begin{lem}\label{lem-6}
		Let $f(u)$ satisfy $(H)$ and $0<\delta<\frac{\alpha}{p}-\frac{\beta}{\lambda_{1,p}}$. Then the following properties hold
		\begin{itemize}
			\item[(i)] Suppose that $J(u)\leq d(\delta)$ and $I_{\delta}(u)>0$, then
			\begin{equation*}
			0<|| \nabla u^m||^p_{L^p(\Omega)} < \frac{d(\delta)}{a(\delta)}.
			\end{equation*}
			In the case $\delta=1$, if $J(u)\leq d$ and $I(u)>0$, then
			\begin{equation*}
			0<|| \nabla u^m||^p_{L^p(\Omega)} < \frac{d}{a(1)}.
			\end{equation*}
			\item[(ii)] Suppose that $J(u)\leq d(\delta)$ and $I_{\delta}(u)=0$, then
			\begin{equation*}
			0<|| \nabla u^m||^p_{L^p(\Omega)} < \frac{d(\delta) }{a(\delta)}.
			\end{equation*}
			In the case $\delta=1$, if $J(u)\leq d$ and $I(u)=0$, then
			\begin{equation*}
			0<|| \nabla u^m||^p_{L^p(\Omega)} < \frac{d}{a(1)}.
			\end{equation*}
			\item[(iii)] Suppose that $J(u)\leq d(\delta)$ and $|| \nabla u^m||^p_{L^p(\Omega)}>d(\delta)/a(\delta)$, then $I_{\delta}(u)<0$. \\
			In the case $\delta=1$, if $J(u)\leq d$ and $|| \nabla u^m||^p_{L^p(\Omega)}>{d}/{a(1)}$, then $I(u)<0$.
		\end{itemize}
	\end{lem}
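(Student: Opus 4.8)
The three items will all follow from a single energy estimate, the same one that underlies Lemma \ref{lem-5}(i), so the plan is to isolate it first. I would prove: if $u^m\in W_0^{1,p}(\Omega)$ satisfies $I_\delta(u)\ge 0$, then
\[
J(u)\ \ge\ a(\delta)\,\|\nabla u^m\|_{L^p(\Omega)}^p ,
\]
with strict inequality whenever $I_\delta(u)>0$. To obtain it, start from $J(u)=\tfrac{1}{pm}\|\nabla u^m\|_{L^p(\Omega)}^p-\int_\Omega(F(u)-\sigma)\,dx$; estimate $\int_\Omega(F(u)-\sigma)\,dx\le \tfrac1\alpha\int_\Omega u^mf(u)\,dx+\tfrac{\beta}{\alpha}\|u^m\|_{L^p(\Omega)}^p$ by the second inequality in $(H)$; replace $\|u^m\|_{L^p(\Omega)}^p$ with $\tfrac{1}{\lambda_{1,p}}\|\nabla u^m\|_{L^p(\Omega)}^p$ using Lemma \ref{lem1}; and bound $\int_\Omega u^mf(u)\,dx\le \delta\|\nabla u^m\|_{L^p(\Omega)}^p$, which is precisely $I_\delta(u)\ge 0$. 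Collecting the coefficients of $\|\nabla u^m\|_{L^p(\Omega)}^p$ gives $\tfrac{1}{pm}-\tfrac{\delta}{\alpha}-\tfrac{\beta}{\lambda_{1,p}\alpha}=a(\delta)$, which is positive exactly because $0<\delta<\tfrac{\alpha}{pm}-\tfrac{\beta}{\lambda_{1,p}}$; the last bound is strict when $I_\delta(u)>0$, yielding the strict version.

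Granting this, item (i) is immediate: $I_\delta(u)>0$ forces $\|\nabla u^m\|_{L^p(\Omega)}\ne 0$ (else $\int_\Omega u^mf(u)\,dx<0$, impossible since $u\ge 0$ and $f\ge 0$ on $[0,\infty)$ make $u^mf(u)\ge 0$), and the strict estimate together with $J(u)\le d(\delta)$ yields $a(\delta)\|\nabla u^m\|_{L^p(\Omega)}^p<J(u)\le d(\delta)$, i.e. $0<\|\nabla u^m\|_{L^p(\Omega)}^p<d(\delta)/a(\delta)$. For item (ii), on the non-trivial branch $\|\nabla u^m\|_{L^p(\Omega)}\ne 0$ the definition of $d(\delta)$ gives $J(u)\ge d(\delta)$, hence $J(u)=d(\delta)$, and the estimate gives $a(\delta)\|\nabla u^m\|_{L^p(\Omega)}^p\le d(\delta)$; one rules out equality by observing that it would require equality simultaneously in the $(H)$-bound and in the eigenvalue inequality of Lemma \ref{lem1} (the latter forcing $u^m$ proportional to the first eigenfunction $w$), which cannot coexist with $I_\delta(u)=0$. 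For item (iii) I would argue by contraposition: if $I_\delta(u)\ge 0$, the estimate gives $J(u)\ge a(\delta)\|\nabla u^m\|_{L^p(\Omega)}^p>a(\delta)\cdot d(\delta)/a(\delta)=d(\delta)$, contradicting $J(u)\le d(\delta)$; hence $I_\delta(u)<0$. Here $d(\delta)>0$ by Lemma \ref{lem-5}(ii) (since $0<\delta<\tfrac{\alpha}{pm}-\tfrac{\beta}{\lambda_{1,p}}\le b$) and $a(\delta)>0$, so $d(\delta)/a(\delta)>0$ and the hypothesis in (iii) is meaningful. The case $\delta=1$ is the same computation with $a(1)$ and $d=d(1)$.

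The core computation is routine bookkeeping; the one point that needs care is the strict-versus-non-strict distinction in (ii), where the borderline case has to be excluded via the rigidity of the eigenvalue inequality and of hypothesis $(H)$. I would also flag at the start the elementary but essential fact $u^mf(u)\ge 0$ for the nonnegative functions in play, since it is what forbids $\|\nabla u^m\|_{L^p(\Omega)}=0$ in (i) and is used silently throughout.
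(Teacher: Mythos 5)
Your proof is correct and follows essentially the same route as the paper: the one chain of estimates
$J(u)\ \ge\ \frac{1}{pm}\|\nabla u^m\|_{L^p(\Omega)}^p-\frac{1}{\alpha}\int_{\Omega}u^mf(u)\,dx-\frac{\beta}{\alpha}\int_{\Omega}u^{pm}\,dx\ \ge\ a(\delta)\|\nabla u^m\|_{L^p(\Omega)}^p$,
obtained from $(H)$, the eigenvalue inequality of Lemma \ref{lem1} and the sign of $I_{\delta}(u)$, then combined with $J(u)\le d(\delta)$; the paper writes this out only for (i) and declares (ii) and (iii) ``similar''. One caveat: your rigidity argument for the strict inequality in (ii) is not actually substantiated --- nothing in $(H)$ or in Lemma \ref{lem1} forbids simultaneous equality in both bounds together with $I_{\delta}(u)=0$, so the borderline case is not genuinely excluded --- but since the paper's own proof does not establish strictness in (ii) either, this is a defect of the lemma as stated rather than a shortfall of your argument relative to the paper's.
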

	\begin{proof}[Proof of Lemma \ref{lem-6}] The case $(i)$ follows from
		\begin{align*}
		J(u) &= \frac{1}{p} || \nabla u^m ||^p_{L^p(\Omega)} - \int_{\Omega } [F(u)- \sigma] dx\\
		&\geq  \frac{1}{p} || \nabla u^m ||^p_{L^p(\Omega)} - \frac{1}{\alpha} \int_{\Omega }u^mf(u)dx- \frac{\beta}{\alpha} \int_{\Omega }u^{pm}dx  \\
		& > \left[\frac{1}{p}  - \frac{\delta}{\alpha} - \frac{\beta}{\lambda_{1,p}\alpha} \right]|| \nabla u^m ||^p_{L^p(\Omega)}  \\
		& = a(\delta) || \nabla u^m ||^p_{L^p(\Omega)} .
		\end{align*}
		The proof of cases $(ii)$ and $(iii)$ follows from the similar argument.
	\end{proof}
	
	Then a family of potential wells can be defined for $0<\delta<b$ as follows
	\begin{align*}
	W_{\delta} &:= \{ u^m \in W_0^{1,p}(\Omega) \,\,| \,\, I_{\delta}(u)>0, J(u)<d(\delta) \} \cup \{0\},\\
	\overline{W}_{\delta} & := W_{\delta}\cup \partial W_{\delta}=\{ u^m \in W_0^{1,p}(\Omega) \,\,| \,\, I_{\delta}(u)\geq 0, J(u) \leq d(\delta) \},
	\end{align*}
	In the case $\delta=1$, we have
	\begin{align*}
	W & := \{ u^m \in W_0^{1,p}(\Omega) \,\,| \,\, I(u)>0, J(u)<d \} \cup \{0\},\\
	\overline{W} & := \{ u^m \in W_0^{1,p}(\Omega) \,\,| \,\, I(u)\geq 0, J(u)\leq d \}.
	\end{align*}
	In addition, we define
	\begin{align*}
	V_{\delta}&:= \{ u^m \in W_0^{1,p}(\Omega) \,\,| \,\, I_\delta(u)<0, J(u)<d(\delta) \},\\
	V &:=\{ u^m \in W_0^{1,p}(\Omega) \,\,| \,\, I(u)<0, J(u)<d \}.
	\end{align*}
	
	From the definition of $W_{\delta}$, $V_{\delta}$ and Lemma \ref{lem-5}, we derive the following properties:
	\begin{lem}\label{lem-7}
		If $0<\delta'<\delta''\leq 1$ and $1\leq \delta''<\delta' <b $, then $W_{\delta'} \subset W_{\delta''}$ and $V_{\delta'} \subset V_{\delta''}$, respectively.
	\end{lem}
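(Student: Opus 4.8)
The plan is to argue directly from the definitions, leaning on two elementary monotonicity facts: for fixed $u$ with $\|\nabla u^m\|_{L^p(\Omega)}\neq 0$ the map $\delta\mapsto I_\delta(u)$ is affine and strictly increasing, since $I_{\delta''}(u)-I_{\delta'}(u)=(\delta''-\delta')\|\nabla u^m\|_{L^p(\Omega)}^p$; and by Lemma \ref{lem-5}(iii) the map $\delta\mapsto d(\delta)$ is strictly increasing on $(0,1]$ and strictly decreasing on $[1,b)$. Everything reduces to checking the two membership conditions ($I_\delta$-sign and $J$ below $d(\delta)$) survive the change $\delta'\rightsquigarrow\delta''$.

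First I treat the case $0<\delta'<\delta''\leq 1$ and take $u\in W_{\delta'}$. If $u^m=0$ then $u\in W_{\delta''}$ trivially, so assume $\|\nabla u^m\|_{L^p(\Omega)}>0$; then $I_{\delta'}(u)>0$ and $J(u)<d(\delta')$. From the affine identity and $\delta''>\delta'$ we get $I_{\delta''}(u)=I_{\delta'}(u)+(\delta''-\delta')\|\nabla u^m\|_{L^p(\Omega)}^p>I_{\delta'}(u)>0$. Since $\delta'<\delta''\leq 1$, Lemma \ref{lem-5}(iii) gives $d(\delta')<d(\delta'')$, hence $J(u)<d(\delta')<d(\delta'')$. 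Therefore $u\in W_{\delta''}$, i.e. $W_{\delta'}\subset W_{\delta''}$.

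Next I treat the case $1\leq\delta''<\delta'<b$ and take $u\in V_{\delta'}$, so $I_{\delta'}(u)<0$ and $J(u)<d(\delta')$; note $u^m\neq 0$ automatically because $I_\delta(0)=0$, whence $\|\nabla u^m\|_{L^p(\Omega)}>0$. Then $I_{\delta''}(u)=I_{\delta'}(u)-(\delta'-\delta'')\|\nabla u^m\|_{L^p(\Omega)}^p<I_{\delta'}(u)<0$ since $\delta'>\delta''$. Because $1\leq\delta''<\delta'<b$, Lemma \ref{lem-5}(iii) now gives $d(\delta'')>d(\delta')$, so $J(u)<d(\delta')<d(\delta'')$. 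Hence $u\in V_{\delta''}$, i.e. $V_{\delta'}\subset V_{\delta''}$.

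There is no genuine obstacle in this proof; the only thing to be careful about is the bookkeeping of signs and endpoints — namely matching the direction of monotonicity of $d(\delta)$ (increasing to the left of $\delta=1$, decreasing to the right) with the placement of $\delta'$ and $\delta''$ in each of the two regimes, and using that nonzero elements of $W_\delta$ and all elements of $V_\delta$ satisfy $\|\nabla u^m\|_{L^p(\Omega)}>0$ so that the strict inequalities for $I_\delta$ are legitimate.
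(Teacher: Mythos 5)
Your proof is correct and follows exactly the route the paper intends: the paper states Lemma \ref{lem-7} without proof, merely noting that it follows ``from the definition of $W_{\delta}$, $V_{\delta}$ and Lemma \ref{lem-5},'' and your argument is precisely the straightforward verification via the affine monotonicity of $\delta\mapsto I_{\delta}(u)$ together with the monotonicity of $d(\delta)$ on either side of $\delta=1$. The sign and endpoint bookkeeping, and the observation that the relevant elements have $\|\nabla u^m\|_{L^p(\Omega)}>0$, are all handled correctly.
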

	\begin{lem}\label{lem-8}
		Let $0<J(u)<d$ for some $u^m \in W_0^{1,p}(\Omega)$. Let $\delta_1<\delta_2$ are two roots of equation $J(u)=d(\delta)$. Then the sign of $I_{\delta}(u)$ is unchangeable for $\delta_1<\delta<\delta_2$.
	\end{lem}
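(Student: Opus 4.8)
The plan is to exploit that, for a \emph{fixed} admissible $u$, the map $\delta\mapsto I_{\delta}(u)$ is affine in $\delta$ and therefore changes sign at most once. If $u\equiv 0$ the claim is vacuous, since $I_{\delta}(0)\equiv 0$; so assume $u^m\not\equiv 0$, which by the Poincar\'e inequality gives $\|\nabla u^m\|_{L^p(\Omega)}^p>0$. Then
\[
I_{\delta}(u)=\delta\,\|\nabla u^m\|_{L^p(\Omega)}^p-\int_{\Omega}u^m f(u)\,dx
\]
is strictly increasing in $\delta$ with a single zero at $\delta^{\ast}:=\|\nabla u^m\|_{L^p(\Omega)}^{-p}\int_{\Omega}u^m f(u)\,dx$, so $I_{\delta}(u)>0$ for $\delta>\delta^{\ast}$ and $I_{\delta}(u)<0$ for $\delta<\delta^{\ast}$. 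Consequently the assertion is equivalent to $\delta^{\ast}\notin(\delta_1,\delta_2)$.

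Next I would locate the two roots using Lemma \ref{lem-5}: since $d(\delta)\to 0$ as $\delta\to 0^+$, is strictly increasing on $[0,1]$, attains its maximum $d=d(1)$ at $\delta=1$, and is strictly decreasing on $[1,b]$ with $d(b)=0$, the hypothesis $0<J(u)<d$ forces exactly one root $\delta_1\in(0,1)$ and exactly one root $\delta_2\in(1,b)$, so in particular $\delta_1<1<\delta_2$. The same unimodal shape yields $d(\delta)>J(u)$ for every $\delta\in(\delta_1,\delta_2)$: on $(\delta_1,1]$ one has $d(\delta)>d(\delta_1)=J(u)$, and on $[1,\delta_2)$ one has $d(\delta)>d(\delta_2)=J(u)$.

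Then I would close the argument by contradiction. Suppose $\delta^{\ast}\in(\delta_1,\delta_2)$. Since $I_{\delta^{\ast}}(u)=0$ and $u^m\in W_0^{1,p}(\Omega)\setminus\{0\}$, the function $u$ is admissible in the variational problem defining $d(\delta^{\ast})$, hence $J(u)\ge d(\delta^{\ast})$; this contradicts $d(\delta^{\ast})>J(u)$ obtained above. Therefore $\delta^{\ast}\le\delta_1$ or $\delta^{\ast}\ge\delta_2$, and in either case $I_{\delta}(u)$ keeps one and the same sign for all $\delta\in(\delta_1,\delta_2)$, namely $I_{\delta}(u)>0$ in the first case and $I_{\delta}(u)<0$ in the second.

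The proof is structural rather than computational and I expect no serious obstacle. The one point deserving care is the claim that $d(\delta)$ remains strictly above the level $J(u)$ on the whole open interval $(\delta_1,\delta_2)$, which is exactly where the precise monotone--unimodal behaviour of $\delta\mapsto d(\delta)$ from Lemma \ref{lem-5}(iii) enters.
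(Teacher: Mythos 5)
Your proposal is correct and follows essentially the same route as the paper: both argue by contradiction that a sign change would produce some $\delta^{\ast}\in(\delta_1,\delta_2)$ with $I_{\delta^{\ast}}(u)=0$, whence $J(u)\ge d(\delta^{\ast})$ by the definition of $d(\delta)$, contradicting $J(u)=d(\delta_1)=d(\delta_2)<d(\delta^{\ast})$ from the unimodality of $d(\delta)$ in Lemma \ref{lem-5}. Your explicit use of the affine dependence of $I_{\delta}(u)$ on $\delta$ merely makes precise the existence and uniqueness of the zero $\delta^{\ast}$, which the paper leaves to continuity.
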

	\begin{proof}[Proof of Lemma \ref{lem-8}]
		The proof follows from contradiction argument. $||\nabla u^m||_{L^p(\Omega)} \neq 0$ comes from $J(u)>0$. If sign of $I_{\delta}(u)$ is changeable for $\delta_1<\delta <\delta_2$, then there exists $\delta^* \in (\delta_1,\delta_2)$ such that $I_{\delta^*}(u)=0$. Thus by the definition of $d(\delta)$ we have $J(u)\geq d(\delta^*)$ that contradicts
		\begin{equation*}
		J(u)=d(\delta_1)=d(\delta_2) < d(\delta^*),
		\end{equation*}
		this completes the proof.
	\end{proof}
	\section{Low initial energy $J(u_0)<d$}\label{sec-4}
	In this section, we study a global existence of problem \eqref{main_eqn_p>2}.
	
	\subsection{Invariant sets of solutions}
	\begin{thm}\label{thm-inv-sets}
		Let $f(u)$ satisfy $(H)$, $u_0^m(x)\in W_0^{1,p}(\Omega)$. Assume that $0<e<d$, then equation $d(\delta)=e$ has two roots $\delta_1<\delta_2$. Therefore, we formulate the following properties
		\begin{itemize}
			\item[$(i)$] All solutions of problem \eqref{main_eqn_p>2} with $J(u_0)=e$ belong to $W_{\delta}$ for $\delta_1 <\delta < \delta_2$, provided  $I(u_0)>0$.
			\item[$(ii)$] All solutions of problem \eqref{main_eqn_p>2} with $J(u_0)=e$ belong to $V_{\delta}$ for $\delta_1 <\delta < \delta_2$, provided  $I(u_0)<0$.
		\end{itemize}
	\end{thm}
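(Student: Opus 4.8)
The plan is to fix an arbitrary $\delta\in(\delta_1,\delta_2)$ and to show that $W_\delta$, in case $(i)$, respectively $V_\delta$, in case $(ii)$, is invariant along the orbit $t\mapsto u(t)$ of problem \eqref{main_eqn_p>2} on its whole maximal existence interval $[0,T_{\max})$. Two preliminary observations make this possible. First, since $0<e<d=d(1)$ and, by Lemma \ref{lem-5}, $d(\cdot)$ is strictly increasing on $[0,1]$ and strictly decreasing on $[1,\infty)$ with $d(\delta)\to0$ as $\delta\to0^+$ and $d(b)=0$, the two roots of $d(\delta)=e$ satisfy $\delta_1<1<\delta_2$ and, moreover, $d(\delta)>e$ for every $\delta\in(\delta_1,\delta_2)$. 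Second, the energy identity established in Section 2,
\[
J(u(t))+\int_0^t\!\!\int_\Omega u^{m-1}(x,\tau)\,u_\tau^2(x,\tau)\,dx\,d\tau=J(u_0)=e ,
\]
yields $J(u(t))\le e<d(\delta)$ for all $t\in[0,T_{\max})$.

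Next I would check that the initial datum already lies in the correct set. Since $I(u_0)\ne0$ (it is positive in $(i)$ and negative in $(ii)$), we have $u_0\not\equiv0$, hence $\|\nabla u_0^m\|_{L^p(\Omega)}\ne0$; and since $0<J(u_0)=e<d$, Lemma \ref{lem-8} applies with $\delta_1,\delta_2$ the two roots of $J(u_0)=d(\delta)$ and shows that the sign of $\delta\mapsto I_\delta(u_0)$ is constant on $(\delta_1,\delta_2)$. Evaluating at $\delta=1\in(\delta_1,\delta_2)$, where $I_1(u_0)=I(u_0)$, we conclude that $I_\delta(u_0)>0$ on $(\delta_1,\delta_2)$ in case $(i)$ and $I_\delta(u_0)<0$ on $(\delta_1,\delta_2)$ in case $(ii)$; together with $J(u_0)=e<d(\delta)$ this gives $u_0\in W_\delta$, respectively $u_0\in V_\delta$.

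For the invariance itself, I would work with the continuous scalar function $t\mapsto I_\delta(u(t))$ on $[0,T_{\max})$, which has the sign of $I_\delta(u_0)$ at $t=0$. Suppose it changed sign and let $t_0\in(0,T_{\max})$ be its first zero, so $I_\delta(u(t))$ keeps that sign on $[0,t_0)$ while $I_\delta(u(t_0))=0$. I would then argue that $\|\nabla u^m(t_0)\|_{L^p(\Omega)}\ne0$: in case $(ii)$ this is immediate, since Lemma \ref{lem-4}(ii) gives $\|\nabla u^m(t)\|_{L^p(\Omega)}>r(\delta)$ for $t<t_0$ and one passes to the limit $t\uparrow t_0$; in case $(i)$ it follows from the non-degeneracy $u(t)\not\equiv0$ on $[0,T_{\max})$, which holds because $u_0\ge0$, $u_0\not\equiv0$ and $f\ge0$ allow one to bound $u$ from below by the solution of the pure porous medium equation. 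Hence $u^m(t_0)\in W_0^{1,p}(\Omega)\setminus\{0\}$ with $I_\delta(u(t_0))=0$, so by the very definition of $d(\delta)$ we get $J(u(t_0))\ge d(\delta)$, contradicting $J(u(t_0))\le e<d(\delta)$. Therefore $I_\delta(u(t))$ never vanishes, and combining its fixed sign with $J(u(t))<d(\delta)$ we obtain $u(t)\in W_\delta$ in case $(i)$ and $u(t)\in V_\delta$ in case $(ii)$, for all $t\in[0,T_{\max})$. As $\delta\in(\delta_1,\delta_2)$ was arbitrary, both assertions follow.

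The main obstacle I anticipate is not algebraic but lies in the soft ingredients: the continuity of $t\mapsto u^m(t)$ in $W_0^{1,p}(\Omega)$ (needed so that $I_\delta(u(\cdot))$ and $J(u(\cdot))$ are continuous and the energy identity of Section 2 is legitimate for the class of weak solutions considered) and, only in case $(i)$, the non-vanishing $u(t)\not\equiv0$ used to exclude the degenerate possibility $\|\nabla u^m(t_0)\|_{L^p(\Omega)}=0$ at the first sign change. Once these are in place, the argument reduces to a direct bookkeeping with Lemmas \ref{lem-4}, \ref{lem-5} and \ref{lem-8} together with the definition of $d(\delta)$.
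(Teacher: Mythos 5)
Your proposal is correct and follows essentially the same route as the paper: establish $J(u(t))\le J(u_0)=e<d(\delta)$ for $\delta\in(\delta_1,\delta_2)$ from the energy identity, place $u_0$ in $W_\delta$ (resp.\ $V_\delta$) via Lemma \ref{lem-8}, and then rule out a first exit time $t_0$ by noting that $I_\delta(u(t_0))=0$ with $\|\nabla u^m(t_0)\|_{L^p}\neq0$ would force $J(u(t_0))\ge d(\delta)$ by the definition of $d(\delta)$. Your treatment is in fact slightly more careful than the paper's in case $(i)$, where you explicitly exclude the degenerate possibility $\|\nabla u^m(t_0)\|_{L^p}=0$, a point the paper passes over (and which is harmless anyway since $0\in W_\delta$ by definition).
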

	\begin{proof}[Proof of Theorem \ref{thm-inv-sets}]
		In the case $(i)$, we suppose that $u(x,t)$ is any solution of problem \eqref{main_eqn_p>2}  with $J(u_0)=e$ and $I(u_0)>0$. If $I(u_0)>0$, then we have
		\begin{equation}\label{eq-3.2}
		m\int_0^t \int_{\Omega} u^{m-1} u_{\tau}^2 dx d\tau + J(u) \leq J(u_0)= d(\delta_1)=d(\delta_2)< d(\delta), \,\,\, \delta_1 < \delta <\delta_2.
		\end{equation}
		From \eqref{eq-3.2} and the definition of $d(\delta)$ we get $I_{\delta}(u_0)>0$ and $J(u_0)<d(\delta)$ that is $u_0^m(x) \in W_{\delta}$ for $0<\delta<b$.
		
		Now we prove $u^m(t) \in W_{\delta}$ for $\delta_1<\delta<\delta_2$ and $0<t<T$, where $T$ is the maximal existence time of $u^m(t)$. Arguing by contradiction, there must exist $t_0 \in (0,T)$ such that $u^m(t_0)\in \partial W_{\delta_0}$ for some $\delta_0 \in (\delta_1, \delta_2)$, and $$I_{\delta_0}(u(t_0))=0\,\,\,\,\,\text{or}\,\,\,\,\,J(u(t_0))=d(\delta_0).$$ From \eqref{eq-3.2}
		it is easy to see that $J(u(t_0))= d(\delta_0)$ is impossible. If $I_{\delta_0}(u(t_0))=0$, then by the definition of $d(\delta_0)$ we have $J(u(t_0))\geq d(\delta_0)$ that contradicts \eqref{eq-3.2}.
		
		In the case $(ii)$, we assume again that $u(x,t)$ is any solution of problem \eqref{main_eqn_p>2} with $$J(u_0)=e\,\,\,\,\text{and}\,\,\,\,I(u_0)<0.$$ As a previous case, $u_0^m(x)\in V_{\delta}$ can be established by Lemma \ref{lem-8} and \eqref{eq-3.2}.
		
		The proof of $u^m(t) \in V_{\delta}$ for $\delta \in (\delta_1,\delta_2)$ follows from arguing by contradiction. Let $t_0 \in (0,T)$ be the first time such that $u^m(t) \in V_{\delta}$ for $t \in [0,t_0)$ and $u^m(t_0) \in \partial V_{\delta_0}$, i.e. $$I_{\delta_0}(u(t_0))=0\,\,\,\,\text{or}\,\,\,\,J(u(t_0))=d(\delta_0),$$ for some $\delta_0 \in (\delta_1,\delta_2)$. From \eqref{eq-3.2} it follows that $J(u(t_0))\neq d(\delta_0)$. If $I_{\delta_0}(u(t_0))=0$, then $I_{\delta_0}(u(t))<0$ for $t \in (0,t_0)$ and Lemma \ref{lem-4} yield $$|| \nabla u^m(t)||_{L^p(\Omega)}>r(\delta_0)\,\,\,\,\text{and}\,\,\,\,|| \nabla u^m(t_0)||_{L^p(\Omega)}\geq r(\delta_0).$$ Hence by the definition of $d(\delta_0)$ we have $J(u(t_0))\geq d(\delta_0)$ which contradicts \eqref{eq-3.2}.
	\end{proof}
	\subsection{Global existence of weak solutions}
	
	Now we construct a global weak solution of the initial-boundary problem \eqref{main_eqn_p>2} by the Galerkin approximation method.
	\begin{thm}\label{thm-GWS}(Global existence for $J(u_0)<d$).
		Let $f$ satisfy $(H)$, $u_0^m \in W_0^{1,p}(\Omega)$. Assume that $J(u_0)<d$ and $I(u_0)>0$. The initial-boundary value problem \eqref{main_eqn_p>2} admits a global weak solution $u^m(t)\in L^{\infty}(0,\infty; W_0^{1,p}(\Omega))$ along with $(u^{\frac{m+1}{2}})_t(t)\in L^{2}(0,\infty;L^2(\Omega))$ and $u^m(t)\in W$ for $t \in [0,\infty)$.
	\end{thm}
	\begin{proof}[Proof of Theorem \ref{thm-GWS}]
		We employ the Galerkin method by selecting smooth functions $\omega_j=(\omega_j(x))$ $(j=1,\ldots)$ such that for a fixed positive integer $k$
		\begin{equation*}
		u_k(t) := \sum_{j=1}^k g_{jk}(t)\omega_j(x), \,\,\, k=1,2,\ldots
		\end{equation*}
		satisfying
		\begin{equation}\label{eq-weak}
		( u_{k}', \omega_s^m) +  (|\nabla u^m_k|^{p-2}  \nabla u^m_{k}, \nabla \omega_s^m )  = ( f(u_k),\omega_s^m) \,\,\, (0\leq t <\infty, \,\, s=1,\ldots,k).
		\end{equation}
		\begin{equation}\label{eq-ic}
		u_k^m(x,0) = \sum_{j=1}^{k} a_{jk}^m\omega_j^m(x) \rightarrow u_0^m(x) \,\,\, \text{ in } \,\, W^{1,p}_0(\Omega).
		\end{equation}
		Multiplying \eqref{eq-weak} by $(g_{sk}^m)'(t)$ and integrating with respect to time from $0$ to $t$, which gives
		\begin{align*}
		m\int_0^t (u_{k}',  g_{sk}^{m-1}g_{sk}'\omega_s^m) d\tau& +  \int_0^t (|\nabla u^m_k|^{p-2}\nabla u^m_{k}, (g_{sk}^m)'\nabla \omega_s^m ) d\tau \\&= \int_0^t (f(u_k),(g_{sk}^m)'\omega_s^m )d\tau,
		\end{align*}
		for $(0\leq t <\infty, \,\, s=1,\ldots,k)$, then summing over $s$, we have
		\begin{align*}
		\int_0^t ( [u_{k}'(\tau)]^2, u^{m-1}_{k}(\tau)) d\tau& +\frac{1}{p} \int_0^t \frac{d}{d\tau} \int_{\Omega} |\nabla u^m_k(\tau)|^{p} dx d\tau\\& = \int_0^t \frac{d}{d\tau}  \int_{\Omega} F(u_k(\tau)) dxd\tau.
		\end{align*}
		That yields
		\begin{align*}
		\int_0^t( [u_{k}'(\tau)]^2, u^{m-1}_{k}(\tau)) d\tau + J(u_k(t)) = J(u_{k}(0)), \,\,\, t \in [0,\infty).
		\end{align*}
		From \eqref{eq-ic} we get $J(u_{k}(0))\rightarrow J(u_0)$, then for sufficiently large $k$, we have
		\begin{align}\label{eq-4.3}
		\int_0^t( [u_{k}'(\tau)]^2, u^{m-1}_{k}(\tau))  d\tau + J(u_k(t)) < d, \,\,\, t \in [0,\infty).
		\end{align}
		Combining \eqref{eq-4.3} and Theorem \ref{thm-inv-sets}, we obtain $u_k^m(t) \in W$ for $t \in [0,\infty)$ and sufficiently  large $k$. From \eqref{eq-4.3} and
		\begin{align*}
		J(u) &= \frac{1}{p} || \nabla u^m ||^p_{L^p(\Omega)} - \int_{\Omega } [F(u)- \sigma] dx\\
		&\geq  \frac{1}{p} || \nabla u^m ||^p_{L^p(\Omega)} - \frac{1}{\alpha} \int_{\Omega }u^mf(u)dx- \frac{\beta}{\alpha} \int_{\Omega }u^{pm}dx  \\
		& \geq \left[\frac{1}{p}  - \frac{1}{\alpha} - \frac{\beta}{\lambda_{1,p}\alpha} \right]|| \nabla u^m ||^p_{L^p(\Omega)}  + \frac{1}{\alpha}I(u),
		\end{align*}
		we establish
		\begin{align}\label{eq-4.4}
		\int_0^t( [u_{k}'(\tau)]^2, u^{m-1}_{k}(\tau))  d\tau  + \left[\frac{1}{p}  - \frac{1}{\alpha} - \frac{\beta}{\lambda_{1,p}\alpha} \right]|| \nabla u^m_k ||^p_{L^p(\Omega)}  < d, \,\,\, t \in [0,\infty)
		\end{align}
		for sufficiently large $k$.  Then for $0\leq t <\infty $ \eqref{eq-4.4} implies the following estimates
		\begin{align*}
		&|| \nabla u^m_k ||^p_{L^p(\Omega)}  < \left[\frac{1}{p}  - \frac{1}{\alpha} - \frac{\beta}{\lambda_{1,p}\alpha} \right]^{-1}d,\\
		&||u^m_k||^{\gamma}_{L^{\gamma}(\Omega)} \leq C_*^{\gamma} 	|| \nabla u^m_k ||_{L^p(\Omega)}^{\gamma} < C_*^{\gamma}\left[\frac{1}{p}  - \frac{1}{\alpha} - \frac{\beta}{\lambda_{1,p}\alpha} \right]^{-\frac{\gamma}{p}}d^{\frac{\gamma}{p}},\\
		&	\int_0^t ||(u_k^{\frac{m+1}{2}})_{\tau}||^2_{L^2(\Omega)}   d\tau  < \frac{(m+1)^2}{4}d,
		\end{align*}
		and
		\begin{align*}
		|| f(u_k)||^q_{L^q(\Omega)} &\leq \int_{\Omega } [\gamma A |u_k|^{\gamma-m}]^q dx = \gamma^q A^q ||  u_k^m||^{\gamma}_{L^{\gamma}(\Omega)} \nonumber \\
		& < \gamma^q A^q C_*^{\gamma}\left[\frac{1}{p}  - \frac{1}{\alpha} - \frac{\beta}{\lambda_{1,p}\alpha} \right]^{-\frac{\gamma}{p}}d^{\frac{\gamma}{p}},
		\end{align*}
		where $q = \frac{m\gamma}{\gamma -m}$.
		
		According the above energy estimates, we see that sequence $\{u_k^m \}_{k=1}^{\infty}$ is bounded in $L^{\infty}(0,\infty;W_0^{1,p}(\Omega))$ and $\{(u_k^{\frac{m+1}{2}})' \}_{k=1}^{\infty}$ is bounded in $L^{2}(0,\infty;L^2(\Omega))$.
		
		Consequently, there exists a subsequence $\{  u^m_{k_l}\}_{l=1}^{\infty} \subset \{u^m_k \}_{k=1}^{\infty}$ and the functions $u^m(t)\in L^{\infty}(0,\infty;W_0^{1,p}(\Omega)) $, with $(u^{\frac{m+1}{2}})'(t) \in L^{2}(0,\infty;L^2(\Omega))$ and $f(u) \in L^{\infty}(0,\infty;L^q(\Omega))$, such that
		\begin{align*}
		u_{k_l}^m &\rightarrow u^m \,\, \text{ weakly star in }  L^{\infty}(0,\infty;W_0^{1,p}(\Omega)) \,\, \text{ and a.e. } Q=\Omega \times [0,\infty) \\
		(u_{k_l}^{\frac{m+1}{2}})_t &\rightarrow (u^{\frac{m+1}{2}})_t \,\, \text{ weakly in }  L^{2}(0,\infty;L^2(\Omega)),\\
		f(u_{k_l})&\rightarrow f(u) \,\, \text{  weakly star in }  L^{\infty}(0,\infty;L^q(\Omega)) \,\,\, \text{ and a.e. } Q=\Omega \times [0,\infty).
		\end{align*}
		For fixed $s$ in \eqref{eq-weak} and taking $k=k_l \rightarrow \infty$ we get  	
		\begin{equation*}
		(	 u_t, \omega_s^m ) + (|\nabla u^m|^{p-2} \nabla u^m, \nabla \omega_s^m )\rangle dx  = (f(u),\omega_s^m), \,\, \text{  for all } \,\,s,
		\end{equation*}
		and
		\begin{equation*}
		\int_{\Omega} u_t v dx + \int_{\Omega} |\nabla u^m|^{p-2}\nabla u^m \cdot \nabla v dx = \int_{\Omega} f(u)v dx, \,\,\, \forall v \in W_0^{1,p}(\Omega), \,\, t \in [0,\infty),
		\end{equation*}
		\begin{equation*}
		u^m(x,0) = u_0^m(x) \,\,\, \text{ in } \,\, W_0^{1,p}(\Omega).
		\end{equation*}
		Therefore $u(x,t)$ is a global weak solution of problem \eqref{main_eqn_p>2}.
	\end{proof}
	\subsection{Asymptotic behavior of weak solutions}	
	Since Theorem \ref{thm-GWS} provides the existence of global weak solutions of initial-boundary problem \eqref{main_eqn_p>2}, we shall consider the asymptotic behavior of solutions for problem \eqref{main_eqn_p>2}.
	\begin{thm}\label{thm-global}
		Let $f(u)$ satisfy $(H)$ and $u_0^m(x)\in W_0^{1,p}(\Omega)$. Suppose that $0<J(u_0)<d$ and $I(u_0)>0$.
		\begin{description}
			\item[(i)] If $pm> m+1,$ then for a global weak solution $u(x,t)$ of problem \eqref{main_eqn_p>2}, there exists a positive constant $\lambda$ such that
			\begin{equation*}
			|| u(t)||_{L^{m+1}(\Omega)} \leq ( || u_0||_{L^{m+1}(\Omega)}^{m+1-pm} + \lambda t)^{- \frac{1}{pm-m-1}},\, 0\leq t < \infty.
			\end{equation*}
			\item[(ii)] If $pm=m+1,$ then the global weak solution $u(x,t)$ of problem \eqref{main_eqn_p>2} satisfies the estimate
			\begin{equation*}
			|| u(t)||_{L^{m+1}(\Omega)} \leq e^{-(m+1)(1-\delta_1)Ct}|| u_0||_{L^{m+1}(\Omega)},\,C>0,\,0\leq t<\infty.
			\end{equation*}
		\end{description}
	\end{thm}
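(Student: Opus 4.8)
The plan is to reduce both statements to a single autonomous differential inequality for $y(t):=\|u(t)\|_{L^{m+1}(\Omega)}^{m+1}=\int_\Omega u^{m+1}\,dx$, where $u$ is the global weak solution supplied by Theorem \ref{thm-GWS}. The first ingredient is a uniform lower bound on the Nehari functional along the flow. Since $0<J(u_0)<d$ and $I(u_0)>0$, Lemma \ref{lem-5} shows that the level $e:=J(u_0)$ is attained by $d(\delta)$ at exactly two points $\delta_1<1<\delta_2$, and by Theorem \ref{thm-inv-sets}$(i)$ we have $u^m(t)\in W_\delta$ for every $\delta\in(\delta_1,\delta_2)$ and all $t\ge 0$. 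In particular, for $\delta\in(\delta_1,1)$ one gets $I_\delta(u(t))>0$, i.e. $\int_\Omega u^mf(u)\,dx\le\delta\|\nabla u^m(t)\|_{L^p(\Omega)}^p$, and letting $\delta\downarrow\delta_1$ yields
\begin{equation*}
I(u(t))=\|\nabla u^m(t)\|_{L^p(\Omega)}^p-\int_\Omega u^mf(u)\,dx\ \ge\ (1-\delta_1)\,\|\nabla u^m(t)\|_{L^p(\Omega)}^p,\qquad t\ge 0,
\end{equation*}
with $1-\delta_1>0$ (the inequality is trivial if $u(t)\equiv 0$).

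Next I would differentiate $y$. Choosing $v=u^m(t)$ in the weak formulation \eqref{eq-weak-solution} gives $\int_\Omega u^mu_t\,dx=-\|\nabla u^m\|_{L^p(\Omega)}^p+\int_\Omega u^mf(u)\,dx=-I(u(t))$, hence $y'(t)=(m+1)\int_\Omega u^mu_t\,dx=-(m+1)I(u(t))\le 0$, and combining with the bound above,
\begin{equation*}
y'(t)\ \le\ -(m+1)(1-\delta_1)\,\|\nabla u^m(t)\|_{L^p(\Omega)}^p .
\end{equation*}
To close the inequality I would push the right-hand side back to a power of $y$: by Lemma \ref{lem1} (the Poincaré inequality for $\lambda_{1,p}$), $\|\nabla u^m\|_{L^p(\Omega)}^p\ge\lambda_{1,p}\|u^m\|_{L^p(\Omega)}^p=\lambda_{1,p}\int_\Omega u^{pm}\,dx$; and since $pm\ge m+1$ (because $p\ge 2$, $m\ge 1$), Hölder's inequality on the bounded domain $\Omega$ gives $\int_\Omega u^{pm}\,dx\ge|\Omega|^{1-\frac{pm}{m+1}}\big(\int_\Omega u^{m+1}\,dx\big)^{\frac{pm}{m+1}}$. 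Writing $\Lambda:=(m+1)(1-\delta_1)\lambda_{1,p}|\Omega|^{1-\frac{pm}{m+1}}>0$, this produces
\begin{equation*}
y'(t)\ \le\ -\Lambda\, y(t)^{\frac{pm}{m+1}},\qquad t\ge 0 .
\end{equation*}

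It then remains to integrate this scalar inequality in the two regimes. If $pm>m+1$, set $\theta:=\frac{pm}{m+1}>1$; dividing by $-y^\theta$ and integrating gives $y(t)^{1-\theta}\ge y(0)^{1-\theta}+(\theta-1)\Lambda t$, hence $y(t)\le\big(y(0)^{1-\theta}+(\theta-1)\Lambda t\big)^{-1/(\theta-1)}$. Since $y(0)^{1-\theta}=\|u_0\|_{L^{m+1}(\Omega)}^{m+1-pm}$, $\tfrac{1}{\theta-1}=\tfrac{m+1}{pm-m-1}$, taking the $(m+1)$-th root gives the claimed estimate with $\lambda=(pm-m-1)(1-\delta_1)\lambda_{1,p}|\Omega|^{1-\frac{pm}{m+1}}>0$. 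If $pm=m+1$, then $\theta=1$, $|\Omega|^{1-\frac{pm}{m+1}}=1$, and $y'\le-\Lambda y$ with $\Lambda=(m+1)(1-\delta_1)\lambda_{1,p}$, so Gronwall gives $y(t)\le y(0)e^{-\Lambda t}$; taking $(m+1)$-th roots yields $\|u(t)\|_{L^{m+1}(\Omega)}\le e^{-(m+1)(1-\delta_1)Ct}\|u_0\|_{L^{m+1}(\Omega)}$ with $C=\lambda_{1,p}/(m+1)$. The one point genuinely needing care is the rigor of the identity $y'(t)=-(m+1)I(u(t))$ for merely weak solutions: this is obtained either by running the computation at the Galerkin level (where $u_k'\in L^2$, $u_k^m\in W_0^{1,p}(\Omega)$ are smooth in $t$) and passing to the limit, or by noting that $t\mapsto\int_\Omega u^{m+1}\,dx$ is absolutely continuous because $u^{m-1}u_t^2\in L^1(Q)$ by the energy identity of Section 2, which makes $\int_\Omega u^mu_t\,dx\in L^1_{loc}$ via Cauchy–Schwarz; this justification, together with keeping all constants positive, is the main technical obstacle.
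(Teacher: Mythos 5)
Your proposal is correct and follows essentially the same route as the paper: test the weak formulation with $u^m$ to get $\frac{1}{m+1}\frac{d}{dt}\int_\Omega u^{m+1}\,dx=-I(u)$, use the invariance of $W_\delta$ (Theorem \ref{thm-inv-sets}) to extract $I(u)\ge(1-\delta_1)\|\nabla u^m\|_{L^p(\Omega)}^p$, convert the gradient term into a power of $\int_\Omega u^{m+1}\,dx$, and integrate the resulting scalar differential inequality in the two regimes $pm>m+1$ and $pm=m+1$. The only (harmless) deviation is in the conversion step, where you use the Poincar\'e inequality with $\lambda_{1,p}$ followed by H\"older on the bounded domain, whereas the paper uses the Sobolev inequality into $L^{p^*}$ together with the embedding $L^{mp^*}(\Omega)\hookrightarrow L^{m+1}(\Omega)$; both yield the same inequality $y'\le-\Lambda y^{pm/(m+1)}$ with an explicit positive constant, and your remark on justifying $y'(t)=-(m+1)I(u(t))$ at the Galerkin level addresses a point the paper passes over.
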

	\begin{proof}[Proof of Theorem \ref{thm-global}]
		(i) Let $u(t) $ be any global weak solution of initial-boundary problem \eqref{main_eqn_p>2} with the initial conditions $J(u_0)<d$ and $I(u_0)>0$.
		Let us multiply \eqref{eq-weak-solution} by $d(t)\in [0,\infty)$, then we have
		\begin{align*}
		&	 (u_t, d(t)v) dx +(|\nabla u^m|^{p-2}\nabla u^m, \nabla d(t)v) =  (f(u),d(t)v ),
		\end{align*}
		for all $v(x) \in W_0^{1,p}(\Omega)$ and $ d(t) \in [0,\infty)$
		and write
		\begin{equation*}
		\int_{\Omega} u_t w dx + \int_{\Omega} |\nabla u^m|^{p-2}\nabla u^m \cdot \nabla w dx = \int_{\Omega} f(u)w dx, \,\,\, \forall w \in L^{\infty}(0,\infty; W_0^{1,p}(\Omega)).
		\end{equation*}
		Setting $w=u^m$, we arrive at
		\begin{equation*}
		\int_{\Omega }u_t u^m dx + \int_{\Omega } |\nabla u^m|^p dx = \int_{\Omega } u^mf(u)dx,
		\end{equation*}
		and it can be rewritten such as
		\begin{equation*}
		\frac{1}{m+1} \frac{d}{dt} \int_{\Omega }u^{m+1}dx + I(u) = 0.
		\end{equation*}
		Theorem \ref{thm-inv-sets} along with $$0<J(u_0)<d\,\,\,\,\text{and}\,\,\,\,I(u_0)>0$$ imply that  $u(t)\in W_{\delta}$ for $\delta \in (\delta_1,\delta_2)$ and $0\leq t < \infty$, where $\delta_1<\delta_2$ are the two roots of equation $d(\delta)=J(u_0)$. Hence, we have $$I_{\delta}(u)\geq 0\,\,\, \text{for} \,\,\,\delta \in (\delta_1,\delta_2)$$ and $$I_{\delta_1}(u)\geq 0\,\,\, \text{for} \,\,\,0\leq t < \infty.$$ Then we have
		\begin{align*}
		\frac{1}{m+1} \frac{d}{dt} \int_{\Omega }u^{m+1}dx + (1-\delta_1) \int_{\Omega} |\nabla u^m|^p dx + I_{\delta_1}(u)=0,
		\end{align*}
		and
		\begin{align*}
		\frac{1}{m+1} \frac{d}{dt} \int_{\Omega }u^{m+1}dx + (1-\delta_1) \int_{\Omega} |\nabla u^m|^p dx \leq 0.
		\end{align*}
		By the Sobolev inequality
		\begin{equation*}
		\int_{\Omega } |\nabla u^m|^p dx\geq C_*^p \left(\int_{\Omega } |u^{m}|^{p^*} dx\right)^{\frac{pm}{p^*m}},\,p^*=\frac{np}{n-p},
		\end{equation*}
		and embedding for $mp^*>m+1$
		\begin{equation*}
		|| u ||_{L^{mp^*}(\Omega)} \geq C || u ||_{L^{m+1}(\Omega)},
		\end{equation*}
		we obtain the following estimate
		\begin{equation*}
		\int_{\Omega } |\nabla u^m|^p dx\geq C_*^pC \left(\int_{\Omega } u^{m+1}dx \right)^{\frac{pm}{m+1}}.
		\end{equation*}
		That gives
		\begin{equation}\label{Assymp}
		\frac{1}{m+1} \frac{d}{dt} \int_{\Omega }u^{m+1}dx + (1-\delta_1)C_*^pC  \left(\int_{\Omega } u^{m+1}dx \right)^{\frac{pm}{m+1}} \leq 0.
		\end{equation}
		Therefore, there exists a constant $\lambda>0$ such that
		\begin{equation*}
		\int_{\Omega }u^{m+1}(t)dx \leq \left( \left[\int_{\Omega }u^{m+1}_0dx\right]^{1-s} + \lambda t \right)^{-\frac{1}{s-1}},
		\end{equation*}
		where $s = \frac{pm}{m+1}$ with $pm> m+1$ and
		\begin{equation*}
		\lambda := (m+1)(s-1)(1-\delta_1)C_*^pC>0.
		\end{equation*}
		
		The case (ii) immediately follows from the inequality \eqref{Assymp}, if we replace $pm$ by $m+1.$
		
		This completes the proof.
	\end{proof}
	\subsection{Finite time blow-up for $J(u_0)<d$}
	
	\begin{thm}\label{thm-blow}
		Let $f(u)$ satisfy $(H)$ and $u_0^m(x)\in W_0^{1,p}(\Omega)$. Suppose that $J(u_0)<d$ and $I(u_0)<0$. Then the solution of problem \eqref{main_eqn_p>2} blows up in a finite time, there exists time $T^*>0$ such that
		\begin{equation}
		\lim_{t\rightarrow T^*} \int_0^t \int_{\Omega } u^{m+1}(x, \tau) dx d\tau = + \infty.
		\end{equation}
	\end{thm}
	\begin{proof}[Proof of Theorem \ref{thm-blow}] The strategy of proof is to show the differential inequality
		\begin{equation}\label{eq-M}
		M''(t) M(t) - (1+\alpha) (M'(t))^2 > 0,
		\end{equation}
		where
		\begin{equation}
		M(t) = \int_0^t \int_{\Omega} u^{m+1}(x, \tau)dxd\tau + M_0,
		\end{equation}
		where $M_0$ is a positive constant.
		In order to prove \eqref{eq-M}, we need the following computations	
		\begin{align*}
		M'(t) &= \int_{\Omega} u^{m+1}(x, \tau)dx\end{align*} and
		\begin{align*}M''(t)&= (m+1)\int_{\Omega} [u^m f(u) - |\nabla u^m|^p] dx = -(m+1)I(u).
		\end{align*}
		Also, we have 	
		\begin{align*}
		M''(t) &=(m+1) \int_{\Omega} u^{m}(x,t) u_t(x,t) dx\\
		& = (m+1) \int_{\Omega} u^{m}(x,t)  \Delta_p u^m(x,t) + (m+1)\int_{\Omega} u^{m}(x,t)f(u(x, t))dx \\
		& = -(m+1) \int_{\Omega} |\nabla u^m(x,t)|^p dx + (m+1)\int_{\Omega} u^{m}(x,t)f(u(x,t))dx
		\\
		& \geq - (m+1) \int_{\Omega} |\nabla u^m(x,t)|^p dx + (m+1) \int_{\Omega} \left[ \alpha F(u(x,t)) -\beta u^{pm} (x,t) -\alpha \sigma \right] dx\\
		& =-\alpha (m+1) \left[ \frac{1}{p} \int_{\Omega} |\nabla u^m(x,t)|^p dx - \int_{\Omega} (F(u(x,t))-\sigma) dx \right]  \\
		&+ \left(\frac{\alpha(m+1)}{p}-m-1\right) \int_{\Omega} |\nabla u^m(x,t)|^p dx -\beta (m+1) \int_{\Omega} u^{pm}(x, t) dx\\
		& \geq -\alpha(m+1) J(u)  + \left[ \lambda_{1,p}\frac{(\alpha-p)(m+1)}{p}\ - \beta (m+1) \right]\int_{\Omega} u^{pm}(x, t) dx \\
		& \geq -\alpha(m+1) J(u_0) + \alpha m(m+1)\int_{0}^t \int_{\Omega} u^{m-1}(x, \tau) (u_{\tau}(x, \tau))^2 dx d\tau \\
		&+ \left[ \lambda_{1,p}\frac{(\alpha-p)(m+1)}{p}\ - \beta (m+1) \right]\int_{\Omega} u^{pm}(x, t) dx,
		\end{align*}
		and
		\begin{align*}
		(	M'(t) )^2 &\leq (m+1)^2(1+\delta^*) \left(\int_{0}^t \int_{\Omega} u^{m+1} (x, \tau)dx d\tau\right)\left( \int_{0}^t \int_{\Omega} u^{m-1}(x, \tau) u_{\tau}^2(x,\tau)dxd\tau \right)   \\
		&+ \left( 1+ \frac{1}{\delta^*}\right)\left( \int_{\Omega} u_0^{m+1}(x)dx \right)^2 \,\,\, \text{ for } \delta^*>0.
		\end{align*}
		Then we write
		\begin{align*}
		&M''(t)M(t) - (1+\delta^* ) (M'(t))^2 \geq -\alpha(m+1) M(t)J(u_0)  + C_1 M(t)\int_{\Omega} u^{pm}(x, t) dx\\
		&+ \alpha m(m+1)\int_{0}^t \int_{\Omega} u^{m-1}(x, \tau) (u_{\tau}(x, \tau))^2 dxd\tau \left( \int_{0}^t\int_{\Omega } u^{m+1}(x, \tau)dx d\tau + M_0 \right) \\
		&- (m+1)^2(1+\delta^*)^2 \left(\int_{0}^t \int_{\Omega} u^{m+1}(x, \tau) dx d\tau\right)\left( \int_{0}^t \int_{\Omega} u^{m-1}(x, \tau) u_{\tau}^2(x,\tau)dxd\tau \right) \\
		& - (1+\alpha)\left( 1+ \frac{1}{\delta^*}\right)\left( \int_{\Omega} u_0^{m+1}(x)dx \right)^2,
		\end{align*}
		where $C_1:= (m+1)(\lambda_{1,p}\frac{(\alpha-p)}{p}\ - \beta) $.
		Furthermore, by taking $\delta^* = \sqrt{\frac{\alpha m}{m+1}}-1>0$ the above expression yields
		\begin{align*}
		M''(t)M(t) &- (1+\alpha ) (M'(t))^2 \geq M(t)\left[\frac{ C_1}{2}\int_{\Omega} u^{pm}(x, t) dx -\alpha(m+1) J(u_0)\right] \\
		& +\frac{ C_1}{2} M(t)\int_{\Omega} u^{pm}(x, t) dx - (1+\alpha)\left( 1+ \frac{1}{\delta^*}\right)\left( \int_{\Omega} u_0^{m+1}(x)dx \right)^2.
		\end{align*}
		By the embedding for $pm \geq m+1$
		\begin{align*}
		|| u(t) ||_{L^{pm}(\Omega)} \geq || u(t)||_{L^{m+1}(\Omega)} = [M'(t)]^{\frac{1}{m+1}},
		\end{align*}
		we obtain
		\begin{align*}
		M''(t)M(t) &- (1+\alpha ) (M'(t))^2 \geq M(t)\left[\frac{ C_1}{2}[M'(t)]^{\frac{pm}{m+1}} dx -\alpha(m+1) J(u_0)\right] \\
		& +\frac{ C_1}{2} M(t)[M'(t)]^{\frac{pm}{m+1}} - (1+\alpha)\left( 1+ \frac{1}{\delta^*}\right)\left( \int_{\Omega} u_0^{m+1}(x)dx \right)^2.
		\end{align*}
		From the assumption $0<J(u_0)<d$, $I(u_0)<0$ and Theorem \ref{thm-inv-sets} we obtain $u^m \in V_{\delta}$ for $1<\delta < \delta_2$ and $t>0$, where $\delta_2$ is the larger root of equation $d(\delta)=J(u_0)$. Hence $I_{\delta}(u)<0$ and Lemma \ref{lem-4} (ii), we have $||\nabla u^m||_{L^p(\Omega)} > r(\delta)$ for $1<\delta < \delta_2$ and $t>0$. So we get $I_{\delta_2}(u)<0$ and $|| \nabla u^m ||_{L^p(\Omega)}> r(\delta_2)$ for $t>0$ and
		\begin{align*}
		M''(t) &= -(m+1)I(u) = (m+1)(\delta_2 - 1)||\nabla u^m||^p_{L^p(\Omega)} - (m+1)I_{\delta_2}(u) \\
		& \geq (m+1) (\delta_2 -1)r^p(\delta_2) > 0 \,\,\, \text{ for } \,\, t>0,\\
		M'(t) &\geq (m+1) r^p(\delta_2) t + M'(0) \geq (m+1) r^p(\delta_2) t\,\,\, \text{ for } \,\, t>0,\\
		M(t) &\geq (m+1) r^p(\delta_2) t^2 + M_0\,\,\, \text{ for } \,\, t>0.
		\end{align*} 
		From the above we make sure that $M'(t)$ increases, therefore there is $t_1$ such that for all $t\geq t_1$ the following inequalities hold
		\begin{equation*}
		\frac{C_1}{2} [M'(t)]^{\frac{pm}{m+1}} >	\alpha(m+1) J(u_0),
		\end{equation*}
		and
		\begin{equation*}
		\frac{C_1}{2} M(t) [M'(t)]^{\frac{pm}{m+1}} dx > (1+\alpha)\left( 1+ \frac{1}{\delta^*}\right)\left( \int_{\Omega} u_0^{m+1}(x)dx \right)^2.
		\end{equation*}
		Hence, we obtain the differential inequality 
		\begin{equation*}
		M''(t) M(t) - (1+\alpha) (M'(t))^2 > 0.
		\end{equation*}
		We can see that the above expression for $t\geq 0$ implies
		\begin{equation*}
		\frac{d}{dt} \left[ \frac{M'(t)}{M^{\alpha+1}(t)} \right] >0,\end{equation*} hence $$\begin{cases}
		M'(t) \geq \left[ \frac{M'(0)}{M^{\alpha+1}(0)} \right] M^{1+\alpha}(t),\\
		M(0)= M_0.
		\end{cases}$$
		Then we arrive at
		\begin{equation*}
		M(t) \geq \left( \frac{1}{M^{\alpha}_0}-\frac{ \alpha \int_{\Omega} u^{m+1}_0(x)dx }{M^{\alpha+1}_0} t\right)^{-\frac{1}{\alpha}}.
		\end{equation*}
		Then the blow-up time $T^*$ satisfies
		\begin{equation*}
		0<T^*\leq \frac{M_0}{\alpha \int_{\Omega} u_0^{m+1}dx}.
		\end{equation*}
		The proof is finished.
	\end{proof}
	\section{Critical initial energy $J(u_0)=d$}\label{sec-5}
	\subsection{Global existence for $J(u_0)=d$}
	\begin{thm}\label{thm-GWS-d}(Global existence for $J(u_0)=d$).
		Let $f$ satisfy $(H)$, $u_0^m \in W_0^{1,p}(\Omega)$. Assume that $J(u_0)=d$ and $I(u_0)\geq 0$. The initial-boundary value problem \eqref{main_eqn_p>2} admits a global weak solution $u^m(t)\in L^{\infty}(0,\infty; W_0^{1,p}(\Omega))$ along with $(u^{\frac{m+1}{2}})_t(t)\in L^{2}(0,\infty;L^2(\Omega))$ and $u(t)\in\overline{W}=W\cup \partial W$ for $t \in [0,\infty)$.
	\end{thm}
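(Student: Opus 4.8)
The plan is to obtain the solution at the critical level $J(u_0)=d$ as a limit of global solutions carrying \emph{subcritical} initial energy, to which Theorem \ref{thm-GWS} already applies; this is the usual rescaling device for the borderline case $J(u_0)=d$. I may assume $\|\nabla u_0^m\|_{L^p(\Omega)}\neq0$, since otherwise $u_0^m\equiv0$ and the trivial stationary function solves \eqref{main_eqn_p>2} and lies in $\overline W$. First I would rescale the datum: choose $\lambda_k\in(0,1)$ with $\lambda_k\uparrow1$ and set $u_{0k}:=\lambda_k u_0$, so that $u_{0k}^m=\lambda_k^m u_0^m\to u_0^m$ in $W_0^{1,p}(\Omega)$. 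Since $I(u_0)\geq0$, Lemma \ref{lem-3}(vi) gives $\epsilon^*(u_0)\geq1$; hence $0<\lambda_k<\epsilon^*(u_0)$, so $I(u_{0k})=I(\lambda_k u_0)>0$ by Lemma \ref{lem-3}(vi), and $J(u_{0k})=J(\lambda_k u_0)<J(u_0)=d$ by the monotonicity of $\epsilon\mapsto J(\epsilon u_0)$ on $[0,\epsilon^*(u_0)]$ (Lemma \ref{lem-3}(v)). Thus each $u_{0k}$ satisfies the hypotheses of Theorem \ref{thm-GWS}, which yields a global weak solution $u_k$ of \eqref{main_eqn_p>2} with $u_k(0)=u_{0k}$, $u_k^m\in L^\infty(0,\infty;W_0^{1,p}(\Omega))$, $u_k'\in L^\infty(0,\infty;L^2(\Omega))$ and $u_k^m(t)\in W$ for all $t\geq0$.

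Next I would collect uniform estimates and pass to the limit. Set $a(1):=\frac{1}{pm}-\frac{1}{\alpha}-\frac{\beta}{\lambda_{1,p}\alpha}>0$ (Lemma \ref{lem-5}). The energy identity $\int_0^t([u_k'(\tau)]^2,u_k^{m-1}(\tau))\,d\tau+J(u_k(t))=J(u_{0k})<d$, combined with $I(u_k(t))>0$ and the inequality $J(u)\geq a(1)\|\nabla u^m\|_{L^p(\Omega)}^p+\frac{1}{\alpha}I(u)$ used in the proof of Theorem \ref{thm-GWS}, gives bounds on $\|\nabla u_k^m(t)\|_{L^p(\Omega)}$, on $\int_0^t([u_k'(\tau)]^2,u_k^{m-1}(\tau))\,d\tau$, and hence (Sobolev embedding) on $\|u_k^m\|_{L^\gamma(\Omega)}$ and $\|f(u_k)\|_{L^q(\Omega)}$ with $q=\frac{m\gamma}{\gamma-m}$, all independent of $k$ because only $J(u_{0k})<d$ enters. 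Consequently $\{u_k^m\}$ is bounded in $L^\infty(0,\infty;W_0^{1,p}(\Omega))$ and $\{u_k'\}$ in $L^\infty(0,\infty;L^2(\Omega))$, and along a subsequence I would get $u_k^m\rightharpoonup u^m$ weakly-$*$ and a.e.\ on $Q=\Omega\times[0,\infty)$ (the a.e.\ convergence via an Aubin--Lions type compactness argument), $u_k'\rightharpoonup u_t$, $f(u_k)\rightharpoonup f(u)$ weakly-$*$, and I would identify the weak limit of $|\nabla u_k^m|^{p-2}\nabla u_k^m$ with $|\nabla u^m|^{p-2}\nabla u^m$ by Minty's monotonicity trick. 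Letting $k\to\infty$ in the weak formulation then shows $u$ is a global weak solution of \eqref{main_eqn_p>2} with $u(0)=u_0$ and the asserted regularity.

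It remains to show $u^m(t)\in\overline W$ for every $t\geq0$. Weak lower semicontinuity in the energy identity for $u_k$ gives $\int_0^t([u_t]^2,u^{m-1})\,d\tau+J(u(t))\leq d$, so $J(u(t))\leq d$. If $I(u_0)=0$, then $u_0$ is a minimizer of $J$ over $\{v:\ v^m\neq0,\ I(v)=0\}$, hence a stationary solution, and by forward uniqueness $u\equiv u_0\in\partial W\subset\overline W$. If $I(u_0)>0$, I would argue as in the proof of Theorem \ref{thm-inv-sets}: were $I(u(t_0))<0$ for some $t_0>0$, pick the last time $t_1\in[0,t_0)$ with $I(u(t_1))=0$; then Lemma \ref{lem-4}(ii) forces $\|\nabla u^m(t_1)\|_{L^p(\Omega)}>0$, so by the definition of $d$ one has $J(u(t_1))\geq d$, which together with the energy inequality forces $\int_0^{t_1}([u_t]^2,u^{m-1})\,d\tau=0$, i.e.\ $u\equiv u_0$ on $[0,t_1]$ with $I(u_0)=I(u(t_1))=0$, contradicting $I(u_0)>0$. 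Hence $I(u(t))\geq0$ and $u^m(t)\in\overline W$ for all $t\geq0$.

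The main obstacle I expect is the limit passage in the second step: producing the almost-everywhere convergence of $u_k^m$ on $Q$ — needed both for $f(u_k)\to f(u)$ and for the monotonicity argument on the $p$-Laplacian term $|\nabla u_k^m|^{p-2}\nabla u_k^m$ — is the technical heart of the construction. A secondary delicate point is that keeping the limit inside $\overline W$ relies on the energy identity surviving in the limit and on enough time-regularity of $t\mapsto I(u(t))$, with the stationary minimizer appearing as a borderline case treated separately as above.
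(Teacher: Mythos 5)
Your proposal is correct and follows essentially the same route as the paper: approximate the critical datum by the scaled-down data $\epsilon_k u_0$ with $\epsilon_k\uparrow 1$, use Lemma \ref{lem-3} to verify $I(\epsilon_k u_0)>0$ and $J(\epsilon_k u_0)<d$, invoke Theorem \ref{thm-GWS} for each $k$, and pass to the limit using the uniform energy bounds. The paper simply defers the limit passage to the proof of Theorem \ref{thm-GWS}, whereas you spell out the compactness and the $\overline W$-membership in more detail.
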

	\begin{proof}[Proof of Theorem \ref{thm-GWS-d}]
		Note that $J(u_0)=d$ implies $|| \nabla u_0^m||_{L^p(\Omega)}\neq 0$. Let us pick a sequence $\{\epsilon_k\}_{k=1}^{\infty}$ such that $0<\epsilon_k<1$ for $k=1,\ldots$ and $\epsilon_k\rightarrow 1$ as $k\rightarrow \infty$.  We consider the following initial-boundary problem
		\begin{align}\label{eq-J=d}
		\begin{cases}
		u_t - \nabla \cdot ( |\nabla u^m|^{p-2}\nabla u^m) = f(u), \,\,\, & (x,t) \in \Omega \times (0,+\infty), \\
		u(x,t)  =0,  \,\,\,& (x,t) \in \partial \Omega \times [0,+\infty), \\
		u(x,0)  = u_{0k}(x)\geq 0,\,\,\, & x \in \overline{\Omega},
		\end{cases}
		\end{align}
		where $u_{0k}(x) = \epsilon_ku_0(x)$ for $k=1,2,\ldots$. From $I(u_0)\geq 0$ and Lemma \ref{lem-3}, we have $\epsilon^*= \epsilon^{*}(u_0)\geq 1$. This gives
		\begin{equation*}
		I(u_{0k})= I(\epsilon_k u_0) >0\end{equation*} and \begin{equation*}J(u_{0k})= J(\epsilon_k u_0) <J(u_0)=d.
		\end{equation*}
		From Theorem \ref{thm-GWS} we know that for each $k$ problem \ref{eq-J=d} admits a global weak solution $u_{k}(t)\in L^{\infty}(0,\infty;W_0^{1,p}(\Omega))$ with $u_{k}'(t)\in L^{2}(0,\infty;L^2(\Omega))$ and $u_k(t)\in W$ for $t\in [0,\infty)$ satisfying
		\begin{equation*}
		(u_k\,v) + (|\nabla u^m_k|^{p-2}\nabla u_k^m , \nabla v) = (f(u_k),v), \,\,\,\, \forall v \in W_0^{1,p}(\Omega), t>0,
		\end{equation*}
		and
		\begin{equation*}
		\int_0^t( [u_{k}'(\tau)]^2, u^{m-1}_{k}(\tau))  d\tau + J(u_k) \leq J(u_{0k}) <d, \,\,\, t \in [0,\infty).
		\end{equation*}
		Then we establish
		\begin{align}
		\int_0^t( [u_{k}'(\tau)]^2, u^{m-1}_{k}(\tau))  d\tau  + \left[\frac{1}{p}  - \frac{1}{\alpha} - \frac{\beta}{\lambda_{1,p}\alpha} \right]|| \nabla u^m_k ||^p_{L^p(\Omega)}  < d, \,\,\, t \in [0,\infty)
		\end{align}
		for sufficiently large $k$.
		The remaining steps of the proof is the same way as in Theorem \ref{thm-GWS}.
	\end{proof}
	\subsection{Asymptotic behavior of solutions}	
	Since Theorem \ref{thm-GWS-d} provides the existence of global weak solutions of initial-boundary problem \eqref{main_eqn_p>2} for critical energy cindition $J(u_0)=d$, we shall consider the asymptotic behavior of solutions for problem \eqref{main_eqn_p>2}.
	\begin{thm}\label{thm-asym}
		Let $f(u)$ satisfy $(H)$ and $u_0^m(x)\in W_0^{1,p}(\Omega)$. Suppose that $J(u_0)=d$ and $I(u_0)\geq0$.
		\begin{description}
			\item[(i)] If $pm> m+1,$ then for a global weak solution $u(x,t)$ of problem \eqref{main_eqn_p>2}, there exists a positive constant $\lambda$ such that
			\begin{equation*}
			|| u(t)||_{L^{m+1}(\Omega)} \leq ( || u_0||_{L^{m+1}(\Omega)}^{m+1-pm} + \lambda t)^{- \frac{1}{pm-m-1}},\, t_1\leq t < \infty,\,t_1>0.
			\end{equation*}
			\item[(ii)] If $pm=m+1,$ then the global weak solution $u(x,t)$ of problem \eqref{main_eqn_p>2} satisfies the estimate
			\begin{equation*}
			|| u(t)||_{L^{m+1}(\Omega)} \leq e^{-(m+1)(1-\delta_1)Ct}|| u_0||_{L^{m+1}(\Omega)},\,C>0,\,t_1\leq t<\infty.
			\end{equation*}
		\end{description}
	\end{thm}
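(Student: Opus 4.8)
The plan is to reduce the critical level $J(u_0)=d$ to the subcritical case treated in Theorem~\ref{thm-global} by moving the origin of time to an instant $t_1>0$ at which the solution has already dropped strictly inside the well. By Theorem~\ref{thm-GWS-d}, problem \eqref{main_eqn_p>2} has a global weak solution $u(t)$ with $u^m(t)\in\overline{W}$ for all $t\ge0$, and the energy identity
\begin{equation*}
\int_0^t\big([u'(\tau)]^2,u^{m-1}(\tau)\big)\,d\tau+J(u(t))=J(u_0)=d,\qquad t\ge0,
\end{equation*}
shows that $J$ is non-increasing along the orbit and $J(u(t))\le d$ throughout.

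First I would construct $t_1>0$ such that $J(u(t_1))<d$ and $I(u(t_1))>0$. The dissipation term cannot vanish on any interval $[0,\varepsilon]$: if it did, then $u_t\equiv0$ there, so $u\equiv u_0$ would solve $-\Delta_p u_0^m=f(u_0)$, forcing $I(u_0)=0$ and contradicting $I(u_0)>0$. (If $I(u_0)=0$ then $u_0$ is a stationary least-energy state and the estimate holds trivially as an equality.) Hence for every $\varepsilon>0$ there is $t\in(0,\varepsilon]$ with $\int_0^t([u']^2,u^{m-1})\,d\tau>0$, i.e.\ $J(u(t))<d$. On the other hand $I(u_0)>0$ and the time-continuity of the orbit give $I(u(t))>0$ for all sufficiently small $t$; choosing $\varepsilon$ in this range and then $t_1\in(0,\varepsilon]$ with $J(u(t_1))<d$, we obtain $I(u(t_1))>0$ and $0<J(u(t_1))<d$, the lower bound coming from $J(w)\ge\big[\tfrac{1}{pm}-\tfrac{1}{\alpha}-\tfrac{\beta}{\lambda_{1,p}\alpha}\big]\|\nabla w^m\|_{L^p(\Omega)}^p+\tfrac{1}{\alpha}I(w)$, the inequality used in the proof of Theorem~\ref{thm-GWS}.

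Then $v(x,s):=u(x,s+t_1)$ is a global weak solution of \eqref{main_eqn_p>2} with $0<J(v(0))<d$ and $I(v(0))>0$, so Theorem~\ref{thm-global} applies to $v$: for $pm>m+1$,
\begin{equation*}
\|v(s)\|_{L^{m+1}(\Omega)}\le\big(\|v(0)\|_{L^{m+1}(\Omega)}^{m+1-pm}+\lambda s\big)^{-\frac{1}{pm-m-1}},\qquad s\ge0,
\end{equation*}
and the exponential bound when $pm=m+1$. Rewriting with $t=s+t_1\ge t_1$, using $\|u(t_1)\|_{L^{m+1}(\Omega)}\le\|u_0\|_{L^{m+1}(\Omega)}$ — which follows from $\tfrac{1}{m+1}\tfrac{d}{dt}\int_\Omega u^{m+1}\,dx=-I(u)\le0$ along the trajectory — and adjusting $\lambda$ to absorb the $t_1$-translation, gives the asserted decay for $t_1\le t<\infty$.

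The main obstacle is the first step: showing that in finite time the orbit moves strictly into the interior of $W$, i.e.\ that the dissipated energy becomes positive while $I(u(\cdot))$ is still positive. This rests on the dissipation identity, on the fact that any stationary solution at level $d$ satisfies $I=0$, and on the time-continuity of $I(u(\cdot))$ near $t=0$; once $t_1$ is in hand, the remainder is a direct transcription of the subcritical Theorem~\ref{thm-global}.
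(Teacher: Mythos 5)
Your strategy --- use strict positivity of the dissipation term to find $t_1>0$ with $J(u(t_1))<d$ while $I$ is still positive, then treat the orbit from $t_1$ onward as a subcritical solution --- is exactly the paper's strategy. The paper phrases the second half by re-deriving the differential inequality $\frac{1}{m+1}\frac{d}{dt}\int_\Omega u^{m+1}\,dx+(1-\delta_1)\int_\Omega|\nabla u^m|^p\,dx\le0$ for $t\ge t_1$ (with $\delta_1$ the smaller root of $d(\delta)=d_1$, $d_1=d-\int_0^{t_1}(u_\tau^2,u^{m-1})\,d\tau$) rather than by a time translation and an appeal to Theorem \ref{thm-global}, but that is cosmetic. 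Your observation that the dissipation cannot vanish (else $u$ is stationary and $I(u_0)=0$) appears in the paper in the equivalent form: $(u_t,u^m)=-I(u)<0$ forces $\int_\Omega u_t^2u^{m-1}\,dx>0$, hence $J(u(t))\le d_1<d$ for every $t\ge t_1>0$.

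The one genuine defect is your dismissal of the case $I(u_0)=0$. You assert the estimate then ``holds trivially as an equality'' because $u_0$ is a stationary least-energy state; but a \emph{nontrivial} stationary state has constant $L^{m+1}$ norm, which contradicts the claimed polynomial or exponential decay, so this case cannot be waved away --- it must either be excluded or shown to force $u_0=0$. The paper handles the analogous degenerate scenario differently (its case (b)): if $I(u(t))>0$ on $[0,t_1)$ and $I(u(t_1))=0$, then $J(u(t_1))<d$ together with $I(u(t_1))=0$ and the very definition of $d$ force $\|\nabla u^m(t_1)\|_{L^p(\Omega)}=0$, i.e.\ the solution is already trivial and the estimate is vacuous from then on. At $t=0$ that argument is unavailable because $J(u_0)=d$ exactly, which is why the paper's written proof quietly restricts to $I(u_0)>0$; your proposal shares this limitation and should at least flag it rather than claim the case is trivial. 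A secondary point: your choice of $t_1$ relies on continuity of $t\mapsto I(u(t))$ at $t=0$, which for weak solutions with only $u^m\in L^\infty(0,\infty;W_0^{1,p}(\Omega))$ is not automatic; the paper's two-case split ($I>0$ for all time versus a first vanishing time) sidesteps the need for continuity at $t=0$, though it tacitly uses a comparable property elsewhere.
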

	\begin{proof}[Proof of Theorem \ref{thm-asym}]
		Recall that the existence of a global weak solution of problem \eqref{main_eqn_p>2} is given in Theorem \ref{thm-GWS-d} with $J(u_0)=d$ and $I(u_0)>0$, which implies $I(u)\geq 0$ for $0\leq t<\infty$. Now we consider the following two cases:
		
		$(a)$ Assume that $I(u)>0$ for $0\leq t<\infty$. From $(u_t,u^m)= -I(u)<0$ we have $\int_{\Omega } u_t^2 u^{m-1}dx>0$, it follows that $\int_0^t(u^2_{\tau},u^{m-1})d\tau$ is increasing for $0\leq t<\infty$. Picking any $t_1>0$ and letting
		\begin{equation*}
		d_1 = d - \int_{0}^{t_1} (u^2_{\tau},u^{m-1})d\tau.
		\end{equation*}
		Since we have
		\begin{align*}
		J(u) \leq J(u_0) - \int_{0}^{t_1} (u^2_{\tau},u^{m-1})d\tau= d_1<d,
		\end{align*}
		and $u^m(t)\in W_{\delta}$ for $\delta_1 < \delta< \delta_2$ and $t_1\leq t<\infty$, where $\delta_1<\delta_2$ are two roots of equation $d(\delta)=d_1$. Then we have $I_{\delta_1}(u)\geq 0$ for $t\geq t_1$ which gives
		\begin{align*}
		\frac{1}{m+1} \frac{d}{dt} \int_{\Omega }u^{m+1}dx + (1-\delta_1) \int_{\Omega} |\nabla u^m|^p dx + I_{\delta_1}(u)=0,
		\end{align*}
		and
		\begin{align*}
		\frac{1}{m+1} \frac{d}{dt} \int_{\Omega }u^{m+1}dx + (1-\delta_1) \int_{\Omega} |\nabla u^m|^p dx \leq 0.
		\end{align*}
		for $t_1\leq t<\infty$. By making use of following estimate
		\begin{equation*}
		\int_{\Omega } |\nabla u^m|^p dx\geq C_*^pC \left(\int_{\Omega } u^{m+1}dx \right)^{\frac{pm}{m+1}},
		\end{equation*}
		we have
		\begin{align*}
		\frac{1}{m+1} \frac{d}{dt} \int_{\Omega }u^{m+1}dx + (1-\delta_1)C_*^pC  \left(\int_{\Omega } u^{m+1}dx \right)^{\frac{pm}{m+1}} \leq 0.
		\end{align*}
		Therefore, for $t_1\leq t<\infty$, there exists a constant $\lambda>0$ such that
		\begin{equation*}
		\int_{\Omega }u^{m+1}(t)dx \leq \left( \left[\int_{\Omega }u^{m+1}_0dx\right]^{1-s} + \lambda t \right)^{-\frac{1}{s-1}},
		\end{equation*}
		where $s = \frac{pm}{m+1}$ with $pm> m+1$ and
		\begin{equation*}
		\lambda := (m+1)(s-1)(1-\delta_1)C_*^pC>0.
		\end{equation*}
		
		$(b)$ Assume that there exists $t_1>0$ such that $I(u(t_1))=0$ and $I(u(t))>0$ for $0\leq t<t_1$. Then $\int_0^t(u^2_{\tau},u^{m-1})d\tau$ is increasing for $0\leq t<t_1$. Then we have
		\begin{equation*}
		J(u(t_1)) \leq d - \int_{0}^{t_1} (u^2_{\tau},u^{m-1})d\tau <d,
		\end{equation*}
		it follows that $|| \nabla u^m ||_{L^p(\Omega)}=0$. Then we get
		\begin{align*}
		\frac{1}{m+1} \frac{d}{dt} \int_{\Omega }u^{m+1}dx + (1-\delta_1) \int_{\Omega} |\nabla u^m|^p dx \leq 0,
		\end{align*}
		for $t_1\leq t<\infty$. The rest of proof is same as the previous case.
		The case (ii) can be proved similarly.
		This completes the proof of Theorem \ref{thm-asym}.
	\end{proof}
	\subsection{Finite time blow-up for $J(u_0)=d$}
	\begin{thm}\label{thm-blow-d}
		Let $f(u)$ satisfy $(H)$ and $u_0^m(x)\in W_0^{1,p}(\Omega)$. Suppose that $J(u_0)=d$ and $I(u_0)<0$. Then the  solution of problem \eqref{main_eqn_p>2} blows up in a finite time, there exists time $T>0$ such that
		\begin{equation}
		\lim_{t\rightarrow T} \int_0^t \int_{\Omega } u^{m+1} dx d\tau = + \infty.
		\end{equation}
	\end{thm}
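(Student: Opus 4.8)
The plan is to reduce the critical level $J(u_0)=d$ to the subcritical blow-up result of Theorem \ref{thm-blow}, in the spirit of the shift-in-time device already used in the asymptotic proof of Theorem \ref{thm-asym}. The engine is the energy identity
\begin{equation*}
J(u(t))+\int_0^t\int_\Omega u^{m-1}(x,\tau)u_\tau^2(x,\tau)\,dx\,d\tau=J(u_0)=d,
\end{equation*}
which forces $J(u(t))$ to drop strictly below $d$ as soon as the dissipation integral is positive. If we can moreover keep $I(u(t))<0$ for a short while, then at a slightly later instant $t_1>0$ the state $u(\cdot,t_1)$ will satisfy $J(u(t_1))<d$ together with $I(u(t_1))<0$, which are precisely the hypotheses of Theorem \ref{thm-blow}.

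\emph{Step 1: $I(u(t))<0$ on the whole existence interval $[0,T)$.} Suppose not, and let $t_0\in(0,T)$ be the first time with $I(u(t_0))=0$. Since $I(u(t))<0$ on $[0,t_0)$, Lemma \ref{lem-4}(ii) gives $||\nabla u^m(t)||_{L^p(\Omega)}>r(1)$ there, hence $||\nabla u^m(t_0)||_{L^p(\Omega)}\geq r(1)>0$, so $u^m(t_0)\neq 0$ and the definition of $d=d(1)$ forces $J(u(t_0))\geq d$. But the energy identity forces $J(u(t_0))\leq d$, so $J(u(t_0))=d$ and $\int_0^{t_0}\int_\Omega u^{m-1}u_\tau^2\,dx\,d\tau=0$; thus $\int_\Omega u^{m-1}u_t^2\,dx=0$ at a.e.\ time $t\in(0,t_0)$. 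By Cauchy--Schwarz, $\big|\int_\Omega u^m u_t\,dx\big|^2\leq\big(\int_\Omega u^{m-1}u_t^2\,dx\big)\big(\int_\Omega u^{m+1}\,dx\big)=0$, and since $I(u)=-\int_\Omega u^m u_t\,dx$ this gives $I(u(t))=0$ for a.e.\ $t\in(0,t_0)$, contradicting $I(u(t))<0$ there.

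\emph{Step 2: passing to subcritical energy.} Fix any $t_1\in(0,T)$. The same Cauchy--Schwarz estimate rules out $\int_0^{t_1}\int_\Omega u^{m-1}u_\tau^2\,dx\,d\tau=0$ (that would force $I(u(\cdot))\equiv0$), so
\begin{equation*}
d_1:=d-\int_0^{t_1}\int_\Omega u^{m-1}(x,\tau)u_\tau^2(x,\tau)\,dx\,d\tau<d,
\end{equation*}
and the energy identity yields $J(u(t_1))\leq d_1<d$, while Step 1 gives $I(u(t_1))<0$ (in particular $||\nabla u^m(t_1)||_{L^p(\Omega)}\neq 0$, since otherwise $u^m(t_1)=0$ and $I(u(t_1))=0$).

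\emph{Step 3: concavity.} I would then rerun the concavity argument of Theorem \ref{thm-blow} with $t_1$ as the initial instant and $d_1$ in place of $d$: with $M(t)=\int_{t_1}^t\int_\Omega u^{m+1}\,dx\,d\tau$, the energy identity, the lower bound for $M''(t)$, and the differential inequality $M''(t)M(t)-(1+\delta)(M'(t))^2>0$ are all invariant under translating the time origin, so $u(\cdot,t_1)$ fulfils the hypotheses $J<d$, $I<0$ of Theorem \ref{thm-blow} and we obtain a finite $T>t_1$ with $\lim_{t\to T}\int_0^t\int_\Omega u^{m+1}\,dx\,d\tau=+\infty$. The only genuine difficulty I foresee is technical: justifying, for the Galerkin weak solutions, the energy identity and the test-function choice behind $I(u)=-\int_\Omega u^m u_t\,dx$, together with the ``first time $I=0$'' continuity argument of Step 1; once these standard points are granted, the critical case adds no new ideas beyond Theorem \ref{thm-blow}.
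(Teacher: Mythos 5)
Your proposal is correct in substance, but it follows a genuinely different route from the paper. The paper proves Theorem \ref{thm-blow-d} by re-running the concavity computation of Theorem \ref{thm-blow} verbatim with $J(u_0)=d$: it derives the same lower bound for $M''(t)M(t)-(1+\alpha)(M'(t))^2$ and then argues that, since $\int_\Omega u^{pm}\,dx$ is convex and eventually increasing, for sufficiently large $t$ one has $\tfrac{C_1}{2}\int_\Omega u^{pm}\,dx>\alpha(m+1)d$ and the remaining constant term is likewise absorbed, so the differential inequality holds and blow-up follows. You instead reduce the critical level to the subcritical theorem: you first show $I(u(t))<0$ persists (via Lemma \ref{lem-4}(ii), the definition of $d$, the energy identity, and the Cauchy--Schwarz identification $I(u)=-\int_\Omega u^m u_t\,dx$), then use strict positivity of the dissipation integral to get $J(u(t_1))<d$ at any $t_1>0$, and finally invoke Theorem \ref{thm-blow} with shifted time origin. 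Your reduction is the standard device in the potential-well literature (it mirrors the time-shift already used in the paper's Theorem \ref{thm-asym}) and it buys two things: it makes the critical case a clean corollary of the subcritical one, and it sidesteps the paper's somewhat informal ``for sufficiently large $t$'' step with the constant $d$ in place of $J(u_0)$. What it costs is the need to treat $u(\cdot,t_1)$ as admissible initial data (a continuation/uniqueness point the paper never formalizes) and the continuity of $t\mapsto I(u(t))$ underlying the ``first time $I=0$'' argument; you correctly flag both as the technical debts of your approach, and they are at the same level of rigor as the arguments already accepted in Theorem \ref{thm-inv-sets}.
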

	\begin{proof}[Proof of Theorem \ref{thm-blow-d}] The proof is similar as in the case $J(u_0)< d$.  
	\end{proof}
	
	\section*{Conclusion and open questions}
	In this paper, we obtained some results on global existence, asymptotic behavior and blow-up of solutions for the porous medium equation \eqref{main_eqn_p>2} under condition $0<J(u_0)\leq d$ or $J(u_0)<0$.
	For convenience, these cases are given in the table below
	\begin{center}
		\begin{tabular}{||c c c||}
			\hline
			& "+" global existence & "-" blow-up \\ [0.5ex]
			\hline\hline
			$J(u_0)< 0$ & - & - \\
			\hline
			$0<J(u_0)<d$  & + & - \\
			\hline
			$J(u_0)=d$ & + & - \\
			\hline
			$J(u_0)>d$ & ? & ? \\
			\hline
			& $I(u_0)\geq 0$ & $I(u_0)< 0$ \\ [1ex]
			\hline
		\end{tabular}
	\end{center}
	It is easy to notice from the above table that in supercritical initial energy case $J(u_0)>d$ the problem \eqref{main_eqn_p>2} has not yet been investigated. It should be noted that such cases were studied in \cite{Xu2} for the pseudoparabolic equation with power nonlinearity.
	
	All results for this paper were obtained for the porous medium equation \eqref{main_eqn_p>2} with $m\geq 1$. That is, the fast diffusion equation case with $0<m<1$ is still open.
	
	The above reasoning allows us to state with confidence about the future perspective of the problem \eqref{main_eqn_p>2}.
	
	
	
	\section*{Acknowledgments}
	The authors would like to thank the reviewers for their valuable comments and remarks.

\end{document}